\definecolor{darkgreen}{rgb}{0.0,0.5,0.0}
\renewcommand*{\backrefalt}[4]{%
    \ifcase #1 \footnotesize{(Not cited.)}%
    \or        \footnotesize{(Cited on page~#2)}%
    \else      \footnotesize{(Cited on pages~#2)}%
    \fi}
\newcommand{\algname}[1]{{\sf#1}\xspace}
\newcommand{\eqdef}{\coloneqq}
\providecommand{\algname}[1]{{\sf#1}\xspace}
\providecommand{\norm}[1]{\left\lVert#1\right\rVert}
  \providecommand{\R}{\mathbb{R}} %
  \providecommand{\wtilde}[1]{\widetilde{#1}}
  \providecommand{\Eb}[1]{{\mathbb E}\left[#1\right] }       %
  \DeclareMathOperator*{\argmin}{arg\,min}
  \providecommand{\0}{\mathbf{0}}
  \renewcommand{\gg}{\mathbf{g}}
  \providecommand{\mm}{\mathbf{m}}
  \providecommand{\xx}{\mathbf{x}}
  \providecommand{\yy}{\mathbf{y}}
  \providecommand{\mI}{\mathbf{I}}
  \providecommand{\cN}{\mathcal{N}}
  \providecommand{\cO}{\mathcal{O}}
\providecommand{\mycomment}[3]{\todo[caption={},color=#3!20,inline]{\textbf{#1: }#2}}%
\providecommand{\myinlinecomment}[3]{%
  {\color{#1}#2: #3}}%
\newcommand\commenter[2]%
\newcommand\csname i#1\endcsname[1]{\myinlinecomment{#2}{#1}{##1}}
\newcommand\csname #1\endcsname[1]{\mycomment{#1}{##1}{#2}}
\newtheorem{proposition}{Proposition}
\newtheorem{lemma}{Lemma}
\newtheorem{remark}[lemma]{Remark}
\newtheorem{assumption}{Assumption}
\newtheorem{theorem}{Theorem}
\renewcommand{\epsilon}{\varepsilon}
    \newcommand*{\algrule}[1][\algorithmicindent]{\makebox[#1][l]{\hspace*{.5em}\thealgruleextra\vrule height \thealgruleheight depth \thealgruledepth}}%
\newcommand*{\thealgruleextra}{}
\newcommand*{\thealgruleheight}{.75\baselineskip}
\newcommand*{\thealgruledepth}{.25\baselineskip}
\def\ALG@printindent{%
    \ifnum \theALG@nested>0%
        \ifx\ALG@text\ALG@x@notext%
        \else
            \unskip
            \addvspace{-1pt}%
            \ALG@printindent@tempcnta=1
            \loop
                \algrule[\csname ALG@ind@\the\ALG@printindent@tempcnta\endcsname]%
                \advance \ALG@printindent@tempcnta 1
            \ifnum \ALG@printindent@tempcnta<\numexpr\theALG@nested+1\relax%
            \repeat
        \fi
    \fi
    }%
\patchcmd{\ALG@doentity}{\noindent\hskip\ALG@tlm}{\ALG@printindent}{}{\errmessage{failed to patch}}
\newbox\statebox
\newcommand{\myState}[1]{%
    \setbox\statebox=\vbox{#1}%
    \edef\thealgruleheight{\dimexpr \the\ht\statebox+1pt\relax}%
    \edef\thealgruledepth{\dimexpr \the\dp\statebox+1pt\relax}%
    \ifdim\thealgruleheight<.75\baselineskip
        \def\thealgruleheight{\dimexpr .75\baselineskip+1pt\relax}%
    \fi
    \ifdim\thealgruledepth<.25\baselineskip
        \def\thealgruledepth{\dimexpr .25\baselineskip+1pt\relax}%
    \fi
    \State #1%
    \def\thealgruleheight{\dimexpr .75\baselineskip+1pt\relax}%
    \def\thealgruledepth{\dimexpr .25\baselineskip+1pt\relax}%
}
\DeclarePairedDelimiterX{\inp}[2]{\langle}{\rangle}{#1, #2}
\DeclarePairedDelimiterX{\cbr}[1]{\{}{\}}{#1} %
\DeclarePairedDelimiterX{\rbr}[1]{(}{)}{#1} %
\DeclarePairedDelimiterX{\sbr}[1]{[}{]}{#1} %
\DeclarePairedDelimiter{\ceil}{\lceil}{\rceil}
\crefname{assumption}{Assumption}{Assumptions}
\Crefname{assumption}{Assumption}{Assumptions}
\title{Non-convex Stochastic Composite Optimization with Polyak Momentum}
\author{
  Yuan Gao\thanks{CISPA Helmholtz Center for Information Security, Germany.} \thanks{Universit\'at des Saarlandes, Germany.} \\
    \texttt{yuan.gao@cispa.de}
    \and
    Anton Rodomanov\footnotemark[1]\\
    \texttt{anton.rodomanov@cispa.de}
    \and
    Sebastian U. Stich\footnotemark[1] \\
    \texttt{stich@cispa.de}
}
\begin{document}

\maketitle

\begin{abstract}
  The stochastic proximal gradient method is a powerful generalization of the widely used stochastic gradient descent (\algname{SGD}) method and has found numerous applications in Machine Learning. 
  However, it is notoriously known that this method fails to converge in non-convex settings where the stochastic noise is significant (i.e.\ when only small or bounded batch sizes are used).
  In this paper, we focus on the stochastic proximal gradient method with Polyak momentum.
  We prove this method attains an optimal convergence rate for non-convex composite optimization problems, regardless of batch size.
  Additionally, we rigorously analyze the variance reduction effect of the Polyak momentum in the composite optimization setting
  and we show the method also converges when the proximal step can only be solved inexactly. Finally, we provide numerical experiments to validate our theoretical results.
\end{abstract}

\section{Introduction}

Stochastic gradient descent and its variants are the workhorse of modern machine learning. The stochastic proximal gradient method is a simple yet powerful extension of the vanilla stochastic gradient descent method, which aims to solve the following stochastic composite optimization problem:
\begin{equation}
    \label{eq:composite}
    \min_{\xx\in\R^d}\{F(\xx)\eqdef  f(\xx)+\psi(\xx)\} \,,
\end{equation}
where $ f:\R^d\to \R$ is smooth and $\psi \colon \R^d \to \R \cup \{+ \infty\}$ is not necessarily differentiable, but a simple function.
Such paradigm is ubiquitous in machine learning and beyond, and it covers a wide range of generalizations of the vanilla optimization problem, including regularized machine learning problems~\cite{Liu_2015_CVPR}, signal processing~\cite{combettes2010proximal}, image processing~\cite{Luke2020} and many more. It also naturally covers constrained optimization problems by considering the indicator function of the constraint set, and some recent variants of such ideas have been applied to distributed and federated machine learning problems~\cite {mishchenko2022proxskip}. 

The problem formulation~\eqref{eq:composite} is often used in Machine Learning as the proximal term $\psi$ allows encoding prior domain knowledge. For instance, it can impose constraints on the variables, handle non-differentiability, induce sparsity, or it can take the form of a regularizer. 
Recently, there has been a surge of interest in these \emph{preconditioning} techniques in the Deep Learning context~\cite{hendrikx2020statistically,woodworth2023losses}. 
Given a loss objective function $\ell \colon \R^d \to \R$, an auxiliary function (or preconditioner) $\hat \ell$ is constructed---for instance, by approximating $\ell$ with a subset of the data samples or synthetic samples. This results in the formulation:
\begin{equation*}
    \min_{\xx\in\R^d}\{ \underbrace{\ell(\xx)-\hat \ell(\xx)}_{f(\xx)} +  \underbrace{\hat \ell(\xx)}_{\psi(\xx)}\} \,.
\end{equation*}
This is a special case of Problem~\eqref{eq:composite}, with $f\eqdef \ell-\hat \ell$ and $\psi\eqdef \hat \ell$. 
This formulation has the advantage that the training can be accelerated if the proxy function $\hat \ell$ remains simple to optimize.
These applications motivate the need to further our understanding of the stochastic composite optimization problem in the non-convex regime, especially where large batches are less preferred or even unavailable~\cite{lecun2012efficient,rieke2020future}.

In the convex and strongly convex case, the complexity of solving Problem~\eqref{eq:composite} is well understood~\cite{ghadimi2016mini,ghadimi2013accelerated}. On the other hand, the existing theory in the non-convex regime is unsatisfactory. In particular, when the gradient noise has variance $\sigma^2$, with the vanilla algorithm, the squared norm of the gradient of $F$ is only shown to converge to $\cO(\nicefrac{1}{K}) + \Omega(\sigma^2)$ after $K$ iterations. This implies that, to converge to an arbitrary $\epsilon$ error, one needs to take mega batches of size $\Omega(\epsilon^{-1})$ at each iteration of the algorithm to reduce the stochastic noise term; and the total number of stochastic gradient oracle calls is $\Omega(\epsilon^{-2})$. In practice and theory, smaller-batch methods are often preferred over mega-batch methods. For example, mega-batch methods follow the full gradient methods much more closely at each step than smaller batch methods, and it is observed empirically to be adversarial to the generalization performance~\cite{wilson003general,lecun2012efficient,  keskar2017largebatch}, and theoretically~\cite{sekhari2021sgd} for certain Machine Learning tasks. Furthermore, in many practical settings, mega-batches are unavailable or intractable to sample, e.g., in medical tasks~\cite{rieke2020future}; federated Reinforcement Learning~\cite{khodadadian2022federated, jin2022federated}; and multi-agent Reinforcement Learning~\cite{doan2019finite}. 

In this work, we revisit the stochastic composite optimization problem in the non-convex regime and show that the Polyak momentum technique addresses the aforementioned mega-batch issue while retaining the optimal convergence rate $\cO(\epsilon^{-2})$. Polyak momentum is the de-facto standard for training Deep Learning models in practice~\cite{kingma2014adam}, and understanding its theoretical efficacy in various settings is an active research direction~\cite{cutkosky2020momentum, fatkhullin2023momentum}.

\subsection{Stochastic Proximal Gradient Method}
\label{sec:spg}
To solve the stochastic composite optimization problem~\eqref{eq:composite}, we consider the stochastic proximal gradient method, which solves the following subproblem in each iteration (commonly known as the proximal step):
find $\xx_{k+1}$ such that
\[
    \Omega_k(\xx_{k+1})\leq \Omega_k(\xx_k) \quad \text{and} \quad \nabla \Omega_k(\xx_{k+1}) = 0,
\]
where
$
    \Omega_k(\xx) \eqdef \inp{\gg_k}{\xx}+\psi(\xx)+\frac{ M_k }{2}\norm{\xx-\xx_k}^2
$.
Here, $\gg_k$ denotes a random vector computed at each iteration $k$ using the stochastic first-order oracle of $f$, and $M_k>0$ is a regularization (or stepsize) parameter. $M_k$ can be a user defined constant, or it can be a non-decreasing sequence.
For the vanilla stochastic proximal gradient method, $\gg_k$ is a stochastic gradient of $f$ at $\xx_k$. 

When $\psi$ is convex, the proximal step is equivalent to minimizing~$\Omega_k$:
\begin{equation}
    \label{eq:proximal-step}
    \xx_{k+1} = \argmin_{\xx \in \R^d} \Bigl\{ \inp{\gg_k}{\xx}+\psi(\xx)+\frac{ M_k }{2}\norm{\xx-\xx_k}^2 \Bigr\}.
\end{equation}
We note that when $\psi\equiv 0$, this reduces to the vanilla \algname{SGD} with step size $\frac{1}{M_k}$. When $\psi\not\equiv 0$, it has long been known that if error-dependent batch sizes are not allowed, then such a method is only proved to converge to a neighborhood of the stationary point of $F$ up to the variance of the gradient noise~\cite{ghadimi2016mini}. In \Cref{sec:lower-bound} we give a simple example to illustrate this phenomenon.
Therefore, one needs $\epsilon$-dependent mini-batches (or mega-batches) to converge to an $\epsilon$ neighborhood of the stationary point of $F$. 

\subsection{Our Contributions}

Stochastic Polyak momentum has seen a huge success in practice~\cite{kingma2014adam}. In this work, we study the effect of Polyak momentum in the non-convex regime and provide a theoretical analysis of momentum's stabilizing effect in the stochastic composite optimization setting, particularly in the small batch regime.
\begin{itemize}[leftmargin=12pt,itemsep=1pt]
    \item First, we establish a lower bound result for the vanilla stochastic proximal gradient method, showing that it cannot converge to the stationary point beyond the variance of the gradient noise.
    \item We study the effect of incorporating momentum into the stochastic proximal gradient method and prove its optimal convergence to the stationary point without any error-dependent mega-batch access. We also rigorously study the variance reduction effect of momentum in the stochastic composite optimization setting.
    \item We further extend our analysis to the case where the proximal steps are solved inexactly and give the same convergence guarantees, demonstrating the robustness of the momentum method.
    \item Finally, we conduct numerical experiments to corroborate our theoretical findings and demonstrate the algorithm's practicality.
\end{itemize}

\section{Related Works}
There is a huge body of work on stochastic and composite optimization. Here, we focus on the stochastic and non-convex regime and do not get into the details of the works in the convex \citep[e.g.][]{ghadimi2012optimal, ghadimi2013optimal} or the deterministic regime \citep[see][]{nesterov2013introductory}. 
The first work that considers the stochastic composite optimization problem in the non-convex case appears to be~\cite{ghadimi2016mini}, in which they established the convergence of the stochastic proximal gradient method to a neighborhood of the stationary point of $F$ up to the variance of the gradient noise. If mega-batches are allowed, they showed that the algorithm takes asymptotically $\cO(\epsilon^{-2})$ stochastic gradient oracle calls to converge to an $\epsilon$ neighborhood of the stationary point of $F$. It was later extended to incorporate the acceleration technique, but the convergence still requires mega-batches. In particular, it requires $k$ samples at the $k$th iteration while the total number of oracle calls is not improved~\cite{ghadimi2013accelerated}. Even though these methods require mega-batches, their upper bounds on the total number of stochastic gradient oracle calls match the lower bound asympotically~\cite{arjevani2023lower}. 

In another research direction, to break the $\cO(\epsilon^{-2})$ lower bound for stochastic non-convex optimization, there have been a long line of works on variance reduction techniques, including Prox-Spiderboost~\cite{wang2019spiderboost} (composite variant of Spider~\cite{fang2018spider}),  Hybrid-SGD~\cite{tran2022hybrid}, and PStorm~\cite{xu2022momentumbased} (composite variant of Storm~\cite{cutkosky2019momentum}). These methods achieve a $\cO(\epsilon^{-\nicefrac{3}{2}})$ asymptotic complexity at the cost of two additional assumptions on the objective and problem structure: \vspace{-1mm} %
\begin{itemize}[leftmargin=12pt,itemsep=1pt,topsep=0pt]
    \item $ f$ is $\wtilde L$-average smooth: $\Eb{\norm{\nabla  f(\xx,\xi) - \nabla  f(\yy,\xi)}^2}\leq \wtilde{L}^2\norm{\xx-\yy}^2$. Note that this assumption might not be satisfied for some simple smooth functions, and it is strictly stronger than the standard smoothness assumption.\footnote{By Jensen's inequality, we have that any $\wtilde L$-average smooth function is also $\wtilde L$-smooth, see also \Cref{assumption:smoothness}. There exists $L$-smooth functions that are not average smooth.}
    \item These variance reduction techniques require the access to two stochastic gradients $\nabla  f(\xx_k,\xi_{k})$ and $\nabla F(\xx_{k-1},\xi_{k})$ at iteration $k$, which might not always be available in practice \citep[e.g.][]{doan2019finite, chen2022efficient}.
\end{itemize}
\paragraph*{Convergence Criteria} The first-order convergence criteria for non-convex optimization problems is an ongoing research topic.  Our work mostly focuses on the convergence in terms of $\norm{\nabla F(\xx_k)}^2$, which we think is the most natural criteria. Many early works study the convergence of problem~\eqref{eq:composite} in terms of the proximal gradient mapping~\cite{ghadimi2016mini, ghadimi2013accelerated, wang2019spiderboost,tran2022hybrid,xu2022momentumbased}. More recently, there have been several works proposing to study the convergence in terms of Moreau envelope and discussing how the different criterias affect the corresponding convergence complexities~\cite{davis2019stochastic,pmlr-v119-zhang20p}. We discuss the differences and connections between these definitions in \Cref{sec:convergence-criteria}.

\paragraph*{Stochastic Polyak Momentum} The idea of using Polyak momentum (a.k.a.\ heavy-ball momentum) was first proposed in~\cite{polyak1964some} for strongly convex quadratic objective in the deterministic case. The first non-asymptotic analysis of \algname{SGD} with Polyak momentum in the smooth non-convex regime is given in~\cite{yu2019linear} and was later refined in~\cite{liu2020improved}. Some recent works also studied how Polyak momentum can be used to remove the dependence on large batches in various settings, e.g.\ for normalized \algname{SGD}~\cite{cutkosky2020momentum} and for communication compressed optimization~\cite{fatkhullin2023momentum}. The analysis of stochastic gradient methods with Polyak momentum remains an active research topic~\cite{wang2020quickly, sebbouh2021almost, li2022last, jelassi2022towards}.

\section{Problem Formulation and Assumptions}

In this work, we consider the composite optimization problem:
\[
    \min_{\xx\in\R^d}\{F(\xx)\eqdef  f(\xx)+\psi(\xx)\} \,.
\]

In the main body of the paper we study the convergence of the algorithms in terms of $\norm{\nabla F(\xx)}^2$ where we assume that $\psi$ is differentiable. We discuss the case where $\psi$ is convex and non-differentiable in \Cref{sec:non-differentiable}. We do not consider the non-convex and non-differentiable case for $\psi$, as this is an exotic scenario in the literature.

Note that most of the existing works on stochastic composite optimization assume that $\psi$ is convex and analyze the convergence of their algorithms in terms of the proximal gradient mapping~\cite{ghadimi2016mini, ghadimi2013accelerated, wang2019spiderboost,tran2022hybrid,xu2022momentumbased}, which is closely related to what we consider in this work. In \Cref{sec:convergence-criteria}, we discuss the proximal gradient mapping and argue that our definition is more natural in the non-convex regime.

We first introduce the following assumption on the monotonicity of the proximal step:

\begin{assumption}
    \label{assumption:psi}
    We assume that, at each iteration $k$ of the algorithms, the output $\xx_{k+1}$ of the proximal step satisfies $\Omega_k(\xx_{k+1}) \leq \Omega_k(\xx_k)$.
\end{assumption}
This is a technical assumption that provides a more nuanced characterization of $\psi$ beyond convexity, but all our theories work with just the simple convexity assumption as well (in fact, several constants can be improved under the convexity assumption). It is natural to assume that the proximal step finds a no worse point than the current iterate, even when $\psi$ is non-convex.

Next, we introduce the lower boundedness assumption of the objective function $F$:

\begin{assumption}
    \label{assumption:finite-min}
    We assume that there exists $F^\star\in \R$ such that $F^\star\leq F(\xx),\forall \xx\in\R^d$.
\end{assumption}

Now, we introduce the smoothness assumption on $ f$. Note that this is a weaker assumption than the average smoothness assumption used in the variance reduction type methods~\cite{ wang2019spiderboost,tran2022hybrid,xu2022momentumbased}.

\begin{assumption}
    \label{assumption:smoothness}
    We assume that the function $ f$ has $L$-Lipschitz gradient, i.e.\ for any $\xx,\yy\in\R^d$, we have:
    \[
        \norm{\nabla  f(\xx)-\nabla  f(\yy)}\leq L\norm{\xx-\yy} \,.
    \]
\end{assumption}

Next, we make an assumption on the noise of the gradient oracle of $ f$.

\begin{assumption}
    \label{assumption:noise}
    We assume that we have access to a gradient oracle $\nabla  f(\xx,\xi)$ for $ f$ such that for all $\xx\in \R^d$ it holds that:
    \begin{equation}
        \label{eq:noise-bound}
        \begin{aligned}
        &\Eb{\nabla  f(\xx,\xi)} = \nabla  f(\xx)\,, \\ &\Eb{\norm{\nabla  f(\xx,\xi)-\nabla  f(\xx)}^2}\leq \sigma^2.
        \end{aligned}
    \end{equation}
\end{assumption}

This is a standard assumption in the analysis of stochastic gradient methods~\cite{nemirovskij1983problem, bubeck2015convex}. Taking mini-batches is equivalent to dividing the variance by the batch size.

\section{Lower Bound for the Vanilla Stochastic Proximal Gradient Method}
\label{sec:lower-bound}
In \Cref{sec:spg} we briefly discussed that the vanilla stochastic proximal gradient method cannot converge to the stationary point of $F$ beyond the variance of the gradient noise. We give a simple example to illustrate this phenomenon and give some intuitions on why momentum resolves the issue.

\begin{restatable}{proposition}{lowerBoundSGD}
    \label{prop:lower-bound-sgd}  
    For any $K\geq 1$ and any (predefined) stepsize coefficients $\{M_k\}_{k=0}^{K - 1}$ (possibly depending on the problem parameters $L,\sigma^2$ and $K$), there exists a problem instance of~\eqref{eq:composite} with $f(\xx)\eqdef \frac{L}{2}\norm{\xx}^2$ and $\psi(\xx)\eqdef \frac{a}{2}\norm{\xx}^2$, $a\eqdef\max_{0 \leq k \leq K - 1} M_k$, satisfying \cref{assumption:finite-min,assumption:smoothness}, and the stochastic gradient oracle $\nabla f(\xx,\xi)\eqdef  \nabla f(\xx)+\xi$, $\xi\sim \cN(0,\sigma^2 \mI)$, satisfying \cref{assumption:noise}, such that, for the sequence $\{\xx_k\}_{k=1}^K$ generated by method~\eqref{eq:proximal-step} started from any initial point~$\xx_0$, it holds that $\Eb{\norm{\nabla F(\xx_k)}^2}\geq \frac{1}{4} \sigma^2$ for any $1 \leq k \leq K$.
\end{restatable}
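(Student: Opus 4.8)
The plan is to exhibit the stated quadratic instance and simply track the second moment of the iterates. Since $f(\xx)=\frac{L}{2}\norm{\xx}^2$ and $\psi(\xx)=\frac{a}{2}\norm{\xx}^2$, the composite objective is $F(\xx)=\frac{L+a}{2}\norm{\xx}^2$ with $\nabla F(\xx)=(L+a)\xx$, so that $\norm{\nabla F(\xx_k)}^2=(L+a)^2\norm{\xx_k}^2$; the target therefore reduces to lower bounding $\Eb{\norm{\xx_k}^2}$. Before that I would verify the assumptions claimed in the statement: \cref{assumption:finite-min} holds with $F^\star=0$; \cref{assumption:smoothness} holds since $\nabla f$ is linear with constant $L$; \cref{assumption:noise} holds by construction of the Gaussian oracle (taking the instance in dimension $d=1$, or rescaling the Gaussian, so that $\Eb{\norm{\xi_k}^2}=\sigma^2$ matches the bound exactly); and \cref{assumption:psi} is automatic because $\Omega_k$ is a strongly convex quadratic, so its unique minimizer $\xx_{k+1}$ satisfies $\Omega_k(\xx_{k+1})\leq\Omega_k(\xx_k)$.

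Next I would compute the proximal step in closed form. With $\gg_k=\nabla f(\xx_k,\xi_k)=L\xx_k+\xi_k$, setting $\nabla\Omega_k(\xx_{k+1})=0$ in $\gg_k+a\xx+M_k(\xx-\xx_k)=0$ yields $\xx_{k+1}=\frac{(M_k-L)\xx_k-\xi_k}{a+M_k}$. The crucial observation is that $\xi_k$ is drawn independently of $\xx_k$ (which is measurable with respect to $\xi_0,\dots,\xi_{k-1}$) and has mean zero, so upon taking expectations the cross term vanishes and
\[
    \Eb{\norm{\xx_{k+1}}^2}=\frac{(M_k-L)^2}{(a+M_k)^2}\Eb{\norm{\xx_k}^2}+\frac{\sigma^2}{(a+M_k)^2}\geq\frac{\sigma^2}{(a+M_k)^2},
\]
where I dropped the nonnegative first term and used $\Eb{\norm{\xi_k}^2}=\sigma^2$ from the oracle model.

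Finally I would combine the pieces. For any $1\leq k\leq K$ the bound above (applied at index $k-1$) gives $\Eb{\norm{\nabla F(\xx_k)}^2}=(L+a)^2\Eb{\norm{\xx_k}^2}\geq\frac{(L+a)^2}{(a+M_{k-1})^2}\sigma^2$. Here the whole point of the adversarial choice $a\eqdef\max_{0\leq j\leq K-1}M_j$ enters: since $M_{k-1}\leq a$ we have $(a+M_{k-1})^2\leq 4a^2\leq 4(L+a)^2$, whence $\frac{(L+a)^2}{(a+M_{k-1})^2}\geq\frac14$, uniformly over the predefined stepsizes. I do not expect a genuine technical obstacle; the only subtle design decision, and the heart of the argument, is to set $\psi$'s curvature $a$ large enough to dominate the largest admissible stepsize, so that no matter how the (oblivious) coefficients $\{M_k\}$ are tuned, the noise injected at step $k-1$ survives the proximal contraction by a factor bounded below by an absolute constant.
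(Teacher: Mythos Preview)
Your proposal is correct and follows essentially the same route as the paper's proof: compute the closed-form proximal iterate for the quadratic instance, use independence of $\xi_k$ and $\xx_k$ to split $\Eb{\norm{\xx_{k+1}}^2}$, drop the nonnegative term, and exploit $a\geq M_{k-1}$ to bound $\frac{(L+a)^2}{(a+M_{k-1})^2}\geq\frac14$. Your extra care in verifying the assumptions and in noting the dimension issue for $\Eb{\norm{\xi_k}^2}=\sigma^2$ is a welcome addition the paper leaves implicit.
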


\begin{proof}
    Clearly, the construction satisfies our assumptions.
    Let us fix an arbitrary $0 \leq k \leq K - 1$.
    Setting the gradient of the auxiliary function in~\eqref{eq:proximal-step} to zero, we see that $\gg_k + a \xx_{k + 1} + M_k (\xx_{k + 1} - \xx_k) = 0$.
    Since $\gg_k = L \xx_k + \xi_k$, it follows that
    \[
        \xx_{k+1} = \frac{ M_k -L}{ M_k+a} \xx_k - \frac{\xi_k}{ M_k+a}.
    \]
    Substituting $\nabla F(\xx_{k + 1}) = (L + a) \xx_{k + 1}$, we obtain
    \begin{align*}
        \Eb{\norm{\nabla F(\xx_{k+1})}^2} & =\frac{(L+a)^2}{(M_k+a)^2}\Eb{\norm{( M_k -L)\xx_k-\xi_k}^2}\\
         &= \frac{(L+a)^2}{(M_k+a)^2}\left(( M_k -L)^2 \Eb{\norm{\xx_k}^2}+\sigma^2\right) \\
        & \geq \frac{(L+a)^2\sigma^2}{(M_k+a)^2}.
    \end{align*}
        
    If $a \geq M_k$, then $\frac{L + a}{M_k + a} \geq \frac{1}{2}$, and the claim follows.
\end{proof}

Note that \Cref{prop:lower-bound-sgd} holds for any initial point $\xx_0$. Even if we start at the optimal point $\xx_0=\0$, the very first step of the method will already incur an $\cO(\sigma^2)$ error and no subsequent steps will be able to reduce it.

It is important that, for the composite optimization problem~\eqref{eq:composite}, we do not make any significant assumptions on~$\psi$.
In particular, we do not assume that $\psi$ is smooth with a certain smoothness constant. \Cref{prop:lower-bound-sgd} demonstrates that, without such extra assumptions, for any fixed choice of parameters for the stochastic proximal gradient method~\eqref{eq:proximal-step}, i.e., the stepsize coefficients~$M_k$ and the number of iterations~$K$, there is always a ``bad'' function in our problem class (namely, $F(\xx)= f(\xx)+\psi(\xx)$ with $f(\xx)=\frac{L}{2}\norm{\xx}^2$ and $\psi(\xx)=\frac{a}{2}\norm{\xx}^2$ for a sufficiently large~$a$) for which the method cannot reach any error $< \frac{1}{4} \sigma^2$ after $K$ steps. In other words, for any given target accuracy $\epsilon < \frac{1}{4} \sigma^2$, it is impossible to find \emph{one specific} choice of the parameters for the method allowing it to reach the $\epsilon$-error on \emph{any} problem from our class.

The problem is that the variance of the gradient noise keeps the iterates away from the stationary point of $F$, and we need some mechanism reducing this variance with time. One such mechanism is the Polyak momentum which we discuss next. 

\section{The Algorithm and Analysis}
\label{sec:main-results}

In this section, we present the stochastic proximal gradient method with the Polyak momentum and establish its convergence guarantees in the non-convex regime. 

\begin{algorithm}[tb]
    \caption{Proximal Gradient Method with Polyak Momentum}
    \label{alg:composite-momentum}
    \begin{algorithmic}[1]
        \State {\bfseries Input:} $\xx_0,\mm_{-1}$ and $\left\{M_k\right\}_{k=0}^\infty,\left\{\gamma_k\right\}_{k=-1}^\infty, \left\{\delta_k\right\}_{k=0}^\infty$, 
        \For{$k = 0,1,2,\dots$}
        \State Compute $\gg_k  = \nabla  f(\xx_k ,\xi_k )$
        \State Update $\mm_{k} = (1-\gamma_{k-1})\mm_{k-1} +\gamma_{k-1}\gg_k $ 
        \State Compute approximate stationary point $\xx_{k+1}$ of
             $\Omega_k(\xx)\eqdef \inp{\mm_{k}}{\xx }+\psi(\xx)+\frac{ M_k }{2}\norm{\xx-\xx_k }^2$
             such that $\Omega_k(\xx_{k+1})\leq \Omega_k(\xx_k) $ and $ \norm{\nabla \Omega_k(\xx_{k+1})}^2\leq\delta_k$
        \EndFor
    \end{algorithmic}	
\end{algorithm}

The method is shown in \Cref{alg:composite-momentum}. In the algorithm, the stochastic gradient $\gg_k$ at each iteration $k$ is replaced by the momentum-aggregated gradient mean:
\[
    \mm_{k} = (1-\gamma_{k-1})\mm_{k-1} +\gamma_{k-1}\gg_k \,.
\]
Our analysis will show that the distance between the momentum and the full gradient decreases as the number of iterations increases, which is the key to resolving the issue of the vanilla stochastic proximal gradient method.

In this section, we assume that the proximal steps are solved exactly, i.e.\ we further make the following assumption:
\begin{assumption}
    \label{assumption:exact-proximal-step}
    At each iteration $k$, we have $\delta_k=0$, i.e. $\nabla \Omega_k(\xx_{k+1})=0$.
\end{assumption}
We relax \Cref{assumption:exact-proximal-step} and discuss the inexact proximal steps in \Cref{sec:inexact-proximal-step}.
\subsection{Convergence Analysis}
We discuss the convergence analysis here, and missing proofs can be found in \Cref{sec:proofs-main-results}.
The convergence analysis of \Cref{alg:composite-momentum} revolves around the following quantities:
\begin{equation}
    \label{eq:FG-def}
    \begin{array}{ccl}
        F_k &\eqdef& \Eb{F(\xx_k)-F^\star} \,,\\
         \Delta_k  &\eqdef& \Eb{\norm{\mm_{k}-\nabla  f(\xx_k )}^2} \,,\\
         R_k &\eqdef &\Eb{\norm{\xx_{k+1}-\xx_k}^2} \,.
    \end{array}
\end{equation}
$F_k$ quantifies the distance between the current objective value and the lower bound. $\Delta_k $ bounds the distance between the current gradient estimate and the full gradient. $R_k$ bounds the distance between two consecutive iterates. We start by giving a descent lemma on $\Delta_k $, which is key to analyzing the variance reduction effect of momentum. Similar statements can be found in~\cite{cutkosky2020momentum} and~\cite{fatkhullin2023momentum}.

\begin{restatable}{lemma}{descentDelta}
    \label{lem:descent-Delta}
    Under \Cref{assumption:noise}, for any $k \geq 0$:
    \[
        \Delta_{k+1}  \leq (1-\gamma_k)\Delta_k +\frac{L^2}{\gamma_k}R_k+\gamma_k^2\sigma^2 \,.
    \]
\end{restatable}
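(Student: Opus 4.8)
The plan is to unroll the momentum recursion by one step and track how the estimation error $\mm_{k+1} - \nabla f(\xx_{k+1})$ relates to the previous error $\mm_k - \nabla f(\xx_k)$. Writing the fresh stochastic gradient as $\gg_{k+1} = \nabla f(\xx_{k+1}) + \zeta_{k+1}$ with noise $\zeta_{k+1} \eqdef \gg_{k+1} - \nabla f(\xx_{k+1})$, the update $\mm_{k+1} = (1-\gamma_k)\mm_k + \gamma_k \gg_{k+1}$ rearranges into
\[
\begin{aligned}
    \mm_{k+1} - \nabla f(\xx_{k+1})
    &= (1-\gamma_k)\bigl(\mm_k - \nabla f(\xx_k)\bigr) \\
    &\quad + (1-\gamma_k)\bigl(\nabla f(\xx_k) - \nabla f(\xx_{k+1})\bigr) + \gamma_k \zeta_{k+1}.
\end{aligned}
\]
The first two terms on the right are measurable with respect to the filtration $\cF_k$ generated by $\xi_0, \dots, \xi_k$ --- crucially, the proximal step produces $\xx_{k+1}$ from $\xx_k$, $\mm_k$ and $M_k$, so $\nabla f(\xx_{k+1})$ does not depend on $\xi_{k+1}$ --- whereas $\gamma_k \zeta_{k+1}$ is the freshly injected noise.

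First I would take the conditional expectation of the squared norm given $\cF_k$. By \Cref{assumption:noise} we have $\Eb{\zeta_{k+1} \mid \cF_k} = 0$ and $\Eb{\norm{\zeta_{k+1}}^2 \mid \cF_k} \leq \sigma^2$, so the cross term between the $\cF_k$-measurable part and $\gamma_k \zeta_{k+1}$ vanishes and the noise contributes at most $\gamma_k^2 \sigma^2$. Collapsing the two measurable terms back into $\mm_k - \nabla f(\xx_{k+1})$ (the $\nabla f(\xx_k)$ telescopes) and taking total expectation gives
\[
    \Delta_{k+1} \leq (1-\gamma_k)^2\, \Eb{\norm{\mm_k - \nabla f(\xx_{k+1})}^2} + \gamma_k^2\sigma^2.
\]

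Next I would split $\mm_k - \nabla f(\xx_{k+1}) = \bigl(\mm_k - \nabla f(\xx_k)\bigr) + \bigl(\nabla f(\xx_k) - \nabla f(\xx_{k+1})\bigr)$ and apply Young's inequality $\norm{u+v}^2 \leq (1+\eta)\norm{u}^2 + (1+\eta^{-1})\norm{v}^2$ with the tuned parameter $\eta = \frac{\gamma_k}{1-\gamma_k}$. This choice is designed so that $(1-\gamma_k)^2(1+\eta) = 1-\gamma_k$, which produces exactly the coefficient $(1-\gamma_k)\Delta_k$ on the old error, while $(1-\gamma_k)^2(1+\eta^{-1}) = \frac{(1-\gamma_k)^2}{\gamma_k}$ multiplies the drift term. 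Bounding the drift by \Cref{assumption:smoothness}, namely $\Eb{\norm{\nabla f(\xx_k) - \nabla f(\xx_{k+1})}^2} \leq L^2 R_k$, and using the crude estimate $(1-\gamma_k)^2 \leq 1$ to clean up the coefficient yields the target term $\frac{L^2}{\gamma_k} R_k$. Assembling the three contributions completes the proof.

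The only genuinely delicate point is the measurability bookkeeping in the first step: one must confirm that $\xx_{k+1}$, and hence $\nabla f(\xx_{k+1})$, is fully determined before $\xi_{k+1}$ is drawn, which is what makes $\zeta_{k+1}$ a mean-zero increment that is uncorrelated (in conditional expectation) with the old-error and drift terms. Everything else is a routine Young's inequality with a constant chosen to make the three target terms line up; the choice $\eta = \gamma_k/(1-\gamma_k)$ is valid for $\gamma_k \in (0,1)$, and the boundary case $\gamma_k = 1$ can be checked directly, since then $\mm_{k+1} = \gg_{k+1}$ gives $\Delta_{k+1} \leq \sigma^2$.
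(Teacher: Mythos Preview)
Your proof is correct and follows essentially the same route as the paper: both arguments reduce to $(1-\gamma_k)^2\,\Eb{\norm{\mm_k - \nabla f(\xx_{k+1})}^2} + \gamma_k^2\sigma^2$ via unbiasedness of $\gg_{k+1}$, then split $\mm_k - \nabla f(\xx_{k+1})$ and apply Young's inequality with the identical parameter $\eta = \gamma_k/(1-\gamma_k)$, finishing with smoothness. Your extra care with the filtration and the $\gamma_k = 1$ boundary case is a nice touch the paper omits.
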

The $(1-\gamma_k)$ factor in the above lemma is the key to show that $\Delta_k $ decreases in the optimization process. Next, we discuss the per-iteration descent of $F_k$. 

Recall that in this section, we have \Cref{assumption:exact-proximal-step}, which is equivalent to the following stationarity assumption:
\begin{equation}
    \label{eq:proxstep-optcond}
    \nabla \psi(\xx_{k+1}) + \mm_{k} +  M_k (\xx_{k+1}-\xx_k) = 0.
\end{equation}

When $\psi$ is convex and non-differentiable, $\nabla \Omega_k(\xx_{k+1})$ is just a subgradient of $\Omega_k$ at $\xx_{k+1}$ that equals to zero, whose existence is guaranteed as the optimality condition. In the following lemmas and theorems, we can simply replace the gradient of $\Omega$ and $\psi$ with such choices of the subgradients and obtain the same results for convex and non-differentiable $\psi$. 

In \Cref{sec:inexact-proximal-step}, we give an approximate stationarity assumption, which is more realistic in practice and shows the same convergence guarantees.

Now we give the following descent lemma on $F_k$:

\begin{restatable}{lemma}{descentF}
    \label{lem:descent-F}
    Under \Cref{assumption:psi,assumption:smoothness,assumption:exact-proximal-step}, for any $k \geq 0$, we have
    \[
        F_{k+1} \leq F_k - \frac{ M_k -L}{4}R_k+\frac{\Delta_k }{ M_k -L}.
    \]
\end{restatable}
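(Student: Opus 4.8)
\textbf{Proof proposal for Lemma~\ref{lem:descent-F}.}

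The plan is to start from the $L$-smoothness of $f$ (\Cref{assumption:smoothness}) applied to the two consecutive iterates $\xx_k$ and $\xx_{k+1}$, which gives the standard quadratic upper bound
\[
    f(\xx_{k+1}) \leq f(\xx_k) + \inp{\nabla f(\xx_k)}{\xx_{k+1}-\xx_k} + \frac{L}{2}\norm{\xx_{k+1}-\xx_k}^2.
\]
Since $F = f + \psi$, I then need to control the $\psi$ increment $\psi(\xx_{k+1}) - \psi(\xx_k)$. The natural tool here is \Cref{assumption:psi} (the monotonicity of the proximal step), namely $\Omega_k(\xx_{k+1}) \leq \Omega_k(\xx_k)$. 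Writing out $\Omega_k$ at both points and rearranging, this inequality yields
\[
    \psi(\xx_{k+1}) - \psi(\xx_k) \leq -\inp{\mm_k}{\xx_{k+1}-\xx_k} - \frac{M_k}{2}\norm{\xx_{k+1}-\xx_k}^2.
\]
Adding the two bounds gives an estimate on $F(\xx_{k+1}) - F(\xx_k)$ in which the inner-product term becomes $\inp{\nabla f(\xx_k) - \mm_k}{\xx_{k+1}-\xx_k}$ and the quadratic terms combine into $-\frac{M_k - L}{2}\norm{\xx_{k+1}-\xx_k}^2$.

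The next step is to handle the cross term $\inp{\nabla f(\xx_k) - \mm_k}{\xx_{k+1}-\xx_k}$, which is exactly where the momentum error $\mm_k - \nabla f(\xx_k)$ (tracked by $\Delta_k$) must be exchanged against the step size $R_k$. I would apply Young's inequality in the form $\inp{\aa}{\bb} \leq \frac{1}{2c}\norm{\aa}^2 + \frac{c}{2}\norm{\bb}^2$ with a carefully chosen weight $c$ so that the resulting $\norm{\xx_{k+1}-\xx_k}^2$ term is at most half of the available negative quadratic budget $\frac{M_k - L}{2}\norm{\xx_{k+1}-\xx_k}^2$. Concretely, choosing the weight to leave a residual coefficient of $-\frac{M_k-L}{4}$ on $\norm{\xx_{k+1}-\xx_k}^2$ forces the companion coefficient on $\norm{\nabla f(\xx_k)-\mm_k}^2$ to be $\frac{1}{M_k-L}$. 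Taking expectations and substituting the definitions $F_k = \Eb{F(\xx_k)-F^\star}$, $\Delta_k = \Eb{\norm{\mm_k - \nabla f(\xx_k)}^2}$, and $R_k = \Eb{\norm{\xx_{k+1}-\xx_k}^2}$ then produces exactly the claimed inequality
\[
    F_{k+1} \leq F_k - \frac{M_k - L}{4}R_k + \frac{\Delta_k}{M_k - L}.
\]

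I expect the main subtlety to be twofold: first, one must verify that $M_k > L$ is implicitly required for the bound to be meaningful (otherwise the quadratic term is not a genuine decrease and the division $\frac{\Delta_k}{M_k - L}$ is ill-defined), and indeed the lemma as stated presupposes $M_k > L$; second, the exact balancing of the Young's inequality constant is the only place where the specific numerical factors ($\frac{1}{4}$ and the matching $\frac{1}{M_k - L}$) are determined, so that is where care is needed rather than conceptual difficulty. A secondary point is that the use of \Cref{assumption:exact-proximal-step} enters only implicitly: the monotonicity \Cref{assumption:psi} alone suffices for this particular descent bound, and the exact-stationarity condition~\eqref{eq:proxstep-optcond} is not actually needed here—so I would not invoke it unless a cleaner derivation substituting $\nabla\psi(\xx_{k+1}) = -\mm_k - M_k(\xx_{k+1}-\xx_k)$ turns out to give better constants.
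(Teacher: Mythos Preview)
Your proposal is correct and follows essentially the same approach as the paper: smoothness of $f$, then \Cref{assumption:psi} to control the $\psi$ increment, then Young's inequality with weight $\frac{M_k-L}{2}$ on the cross term $\inp{\nabla f(\xx_k)-\mm_k}{\xx_{k+1}-\xx_k}$, and finally take expectations. Your observation that \Cref{assumption:exact-proximal-step} is not actually invoked in this particular lemma is also accurate.
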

\begin{remark}
    The constants in \Cref{lem:descent-F} can be slightly improved if \Cref{assumption:psi} is replaced by the convexity of~$\psi$.
\end{remark}
\begin{proof}
    By \Cref{assumption:smoothness}, we have:
    \begin{align*}
        F(\xx_{k+1}) &=  f(\xx_{k+1}) +\psi(\xx_{k+1})\\
        &\leq  f(\xx_k) + \inp{\nabla  f(\xx_k)}{\xx_{k+1}-\xx_k} \\
        &\quad + \frac{L}{2}\norm{\xx_{k+1}-\xx_k}^2+\psi(\xx_{k+1})\\
        &= f(\xx_k) + \inp{\mm_{k}}{\xx_{k+1}-\xx_k} + \psi(\xx_{k+1}) \\
        &\quad + \frac{ M_k }{2}\norm{\xx_{k+1}-\xx_k}^2 - \frac{ M_k -L}{2}\norm{\xx_{k+1}-\xx_k}^2\\
        &\quad + \inp{\nabla  f(\xx_k)-\mm_{k}}{\xx_{k+1}-\xx_k} \,.
    \end{align*}
    Then we apply \Cref{assumption:psi} and notice that $\Omega_k(\xx_k)=\psi(\xx_k)+\inp{\mm_k}{\xx_k}$:
    \begin{align*}
        F(\xx_{k+1}) &\leq F(\xx_k) - \frac{ M_k -L}{2}\norm{\xx_{k+1}-\xx_k}^2\\
        &\quad + \inp{\nabla  f(\xx_k)-\mm_{k}}{\xx_{k+1}-\xx_k} \,.
    \end{align*}
    Now we apply Young's equality and get
    \begin{align*}
        F(\xx_{k+1}) 
            &\leq F(\xx_k)- \frac{ M_k -L}{4}\norm{\xx_{k+1}-\xx_k}^2 \\
            &\quad + \frac{\norm{\mm_{k}-\nabla  f(\xx_k)}^2}{ M_k -L}\,.
    \end{align*}
    We get the desired result by subtracting $F^\star$ and taking expectation on both sides.
\end{proof}
Note that these will be enough if we aim to prove convergence in terms of the distance between the iterates, but not enough to guarantee a convergence in terms of $\Eb{\norm{\nabla F(\xx_k)}^2}$. Therefore, we relate the distance between iterates and the norm of the gradient in the following lemma:

\begin{restatable}{lemma}{gradfBound}
    \label{lem:gradf-bound}
    Under \cref{assumption:smoothness,assumption:exact-proximal-step}, for any $k \geq 0$,
    \[
        (M_k^2 + L^2) R_k \geq \frac{1}{3} \Eb{\norm{\nabla F(\xx_{k+1})}^2} - \Delta_k.
    \]
\end{restatable}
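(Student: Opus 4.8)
The plan is to express $\nabla F(\xx_{k+1})$ explicitly via the exact stationarity condition \eqref{eq:proxstep-optcond} and then bound its squared norm by a sum of three controllable pieces. Since $\psi$ is differentiable in this section, we have $\nabla F(\xx_{k+1}) = \nabla f(\xx_{k+1}) + \nabla\psi(\xx_{k+1})$, and the optimality condition \eqref{eq:proxstep-optcond} gives $\nabla\psi(\xx_{k+1}) = -\mm_k - M_k(\xx_{k+1}-\xx_k)$. Substituting this eliminates the unknown $\nabla\psi$ and leaves only quantities we can estimate:
\[
    \nabla F(\xx_{k+1}) = \nabla f(\xx_{k+1}) - \mm_k - M_k(\xx_{k+1}-\xx_k).
\]

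First I would insert and subtract $\nabla f(\xx_k)$ to form the split
\[
    \nabla F(\xx_{k+1}) = \bigl(\nabla f(\xx_{k+1})-\nabla f(\xx_k)\bigr) + \bigl(\nabla f(\xx_k)-\mm_k\bigr) - M_k(\xx_{k+1}-\xx_k),
\]
where the first term is controlled by smoothness, $\norm{\nabla f(\xx_{k+1})-\nabla f(\xx_k)}\le L\norm{\xx_{k+1}-\xx_k}$ (\Cref{assumption:smoothness}); the second is exactly the momentum error, whose expected squared norm is $\Delta_k$; and the third has norm $M_k\norm{\xx_{k+1}-\xx_k}$. Applying the elementary inequality $\norm{a+b+c}^2 \le 3(\norm{a}^2+\norm{b}^2+\norm{c}^2)$ then yields, pointwise,
\[
    \norm{\nabla F(\xx_{k+1})}^2 \le 3(L^2+M_k^2)\norm{\xx_{k+1}-\xx_k}^2 + 3\norm{\nabla f(\xx_k)-\mm_k}^2.
\]

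Taking expectations and inserting the definitions $R_k = \Eb{\norm{\xx_{k+1}-\xx_k}^2}$ and $\Delta_k = \Eb{\norm{\mm_k-\nabla f(\xx_k)}^2}$ gives $\Eb{\norm{\nabla F(\xx_{k+1})}^2} \le 3(L^2+M_k^2)R_k + 3\Delta_k$; dividing by $3$ and rearranging produces exactly the claimed bound. The only real design choice in the argument is the three-way decomposition together with the accompanying factor-of-$3$ bound, which is precisely what generates the constant $\frac13$ in the statement. I therefore expect no genuine obstacle: the crux is simply recognizing that the stationarity condition \eqref{eq:proxstep-optcond} is what allows $\nabla\psi$ to be traded for $\mm_k$ plus a term proportional to $\xx_{k+1}-\xx_k$, after which smoothness and the definition of $\Delta_k$ handle the remaining two pieces.
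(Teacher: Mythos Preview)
Your proof is correct and follows essentially the same route as the paper's: both rely on the three-term decomposition of $\nabla F(\xx_{k+1})$ into the smoothness piece $\nabla f(\xx_{k+1})-\nabla f(\xx_k)$, the momentum error $\nabla f(\xx_k)-\mm_k$, and the step piece $-M_k(\xx_{k+1}-\xx_k)$, then apply $\norm{a+b+c}^2\le 3(\norm a^2+\norm b^2+\norm c^2)$ and take expectations. The only cosmetic difference is that you invoke the stationarity condition~\eqref{eq:proxstep-optcond} first to eliminate $\nabla\psi(\xx_{k+1})$, whereas the paper keeps the term $\mm_k+\nabla\psi(\xx_{k+1})$ intact and applies~\eqref{eq:proxstep-optcond} after the three-way split; the resulting bound and constants are identical.
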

\begin{proof}
    We can split $\nabla F(\xx_{k+1})$ in the following way:
    \begin{align*}
        \nabla F(\xx_{k+1}) & = \nabla  f(\xx_{k+1})+\nabla\psi(\xx_{k+1})\\
            & = \mm_{k}+\nabla\psi(\xx_{k+1})+(\nabla  f(\xx_k)-\mm_{k})\\
            &\quad +(\nabla  f(\xx_{k+1})-\nabla f(\xx_k)) \,.
    \end{align*}
    Therefore,
    \begin{align*}
        \norm{\nabla F(\xx_{k+1})}^2
        \leq
        3\norm{\mm_{k}+\nabla\psi(\xx_{k+1})}^2
        + 3 \norm{\mm_{k}-\nabla  f(\xx_k)}^2
        + 3 \norm{\nabla  f(\xx_{k+1})-\nabla f(\xx_k)}^2.
    \end{align*}
    Now apply the stationarity condition \eqref{eq:proxstep-optcond} on the first term, and use \Cref{assumption:smoothness} on the third term, we get:
    \[
        \norm{\nabla F(\xx_{k+1})}^2 \leq 3( M_k ^2+L^2)\norm{\xx_{k+1}-\xx_k}^2 + 3\norm{\mm_{k}-\nabla  f(\xx_k)}^2 \,.
    \]
    Rearranging and taking expectations, we get the claim.
\end{proof}

Now, we can piece together all of the above lemmas and consider the following Lyapunov function:
\begin{equation}
    \label{eq:lyapunov-def}
    \Phi_k  \eqdef F_k + a\Delta_k \,,
\end{equation}
where $a$ is a constant to be determined later. Note that $a$ does not have an impact on the algorithm itself, and it only shows up in the analysis. Now we give the following lemma on $\Phi_k $:

\begin{restatable}{lemma}{descentPhi}
    \label{lem:lyapunov} 
    Let \cref{assumption:psi,assumption:smoothness,assumption:noise,assumption:exact-proximal-step} hold,
    and let $a \eqdef \frac{3}{8L}$, $\gamma_k \eqdef \frac{3L}{M_k -L}$ and $ M_k >4L$ for any $k \geq 0$.
    Then, for any $k \geq 0$,
    \begin{equation}
        \label{eq:lyapunov}
        \Phi_{k+1} \leq \Phi_k  - \frac{1}{48 M_k }\Eb{\norm{\nabla F(\xx_{k+1})}^2} + \frac{27L \sigma^2}{4 M_k ^2}.
    \end{equation}
\end{restatable}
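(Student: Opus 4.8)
The plan is to assemble the three preceding lemmas into inequality~\eqref{eq:lyapunov} by expanding $\Phi_{k+1} = F_{k+1} + a \Delta_{k+1}$ and carefully tracking the coefficients of $R_k$, $\Delta_k$, and $\sigma^2$. First I would apply \Cref{lem:descent-F} to $F_{k+1}$ and \Cref{lem:descent-Delta} to $\Delta_{k+1}$, obtaining an upper bound on $\Phi_{k+1}$ in terms of $F_k$, $\Delta_k$, $R_k$, and $\sigma^2$. Substituting the prescribed $a = \frac{3}{8L}$ and $\gamma_k = \frac{3L}{M_k - L}$, the coefficient of $\Delta_k$ relative to $\Phi_k = F_k + a\Delta_k$ collapses to $\frac{1}{M_k - L} - a\gamma_k = -\frac{1}{8(M_k - L)}$, the coefficient of $R_k$ collapses to $-\frac{M_k - L}{4} + \frac{aL^2}{\gamma_k} = -\frac{M_k - L}{8}$, and the stochastic term becomes $a\gamma_k^2 \sigma^2 = \frac{27L}{8(M_k - L)^2}\sigma^2$. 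At this stage,
\begin{equation*}
    \Phi_{k+1} \leq \Phi_k - \frac{M_k - L}{8}R_k - \frac{1}{8(M_k - L)}\Delta_k + \frac{27L}{8(M_k - L)^2}\sigma^2.
\end{equation*}

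Next I would convert the negative $R_k$ term into the desired gradient-norm term via \Cref{lem:gradf-bound}, which rearranges to $R_k \geq \frac{1}{M_k^2 + L^2}\bigl(\frac{1}{3}\Eb{\norm{\nabla F(\xx_{k+1})}^2} - \Delta_k\bigr)$. Since the coefficient of $R_k$ is negative, substituting this lower bound is valid and yields $-\frac{M_k - L}{24(M_k^2 + L^2)}\Eb{\norm{\nabla F(\xx_{k+1})}^2}$ together with a \emph{positive} contribution $\frac{M_k - L}{8(M_k^2 + L^2)}\Delta_k$. The crucial point is that the reserved negative term $-\frac{1}{8(M_k - L)}\Delta_k$ must dominate this new positive one; this reduces to $(M_k - L)^2 \leq M_k^2 + L^2$, i.e.\ $-2 M_k L \leq 0$, which holds for all $M_k, L > 0$, so the total $\Delta_k$ contribution is non-positive and can be dropped.

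It remains to sharpen the two surviving coefficients using $M_k > 4L$. For the gradient term I would check $\frac{M_k - L}{24(M_k^2 + L^2)} \geq \frac{1}{48 M_k}$, which rearranges to $M_k^2 - 2 M_k L - L^2 \geq 0$; writing $t \eqdef M_k/L > 4$ this is $t^2 - 2t - 1 \geq 0$, valid since the left side is increasing for $t > 1$ and already positive at $t = 4$. For the stochastic term I would check $\frac{27L}{8(M_k - L)^2} \leq \frac{27L}{4 M_k^2}$, i.e.\ $M_k^2 \leq 2(M_k - L)^2$, equivalently $t \geq 2 + \sqrt{2}$, again implied by $t > 4$. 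Combining these two bounds produces exactly~\eqref{eq:lyapunov}.

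The main obstacle is the sign bookkeeping around $\Delta_k$: the argument succeeds only because the variance-reduction descent supplied by the momentum recursion (the $-\frac{1}{8(M_k-L)}\Delta_k$ term) is deliberately held in reserve to cancel the positive $\Delta_k$ term that \Cref{lem:gradf-bound} injects when trading $R_k$ for the gradient norm. Calibrating $a$ and $\gamma_k$ so that this cancellation goes through, while simultaneously leaving enough of the negative $R_k$ term to generate the gradient contribution, is the delicate part; everything else is routine scalar estimates controlled by the margin $M_k > 4L$.
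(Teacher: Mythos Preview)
Your proposal is correct and follows essentially the same route as the paper: combine \Cref{lem:descent-F} and \Cref{lem:descent-Delta}, substitute into \Cref{lem:gradf-bound}, verify that the $\Delta_k$ contributions cancel via $(M_k-L)^2\leq M_k^2+L^2$, and then simplify the gradient and noise coefficients using $M_k>4L$. The only cosmetic difference is that the paper presents the argument by \emph{deriving} the values $a=\tfrac{3}{8L}$ and $\gamma_k=\tfrac{3L}{M_k-L}$ from the cancellation requirement, whereas you plug them in from the start and verify; both reach the identical intermediate bound $\Phi_{k+1}\leq\Phi_k-\tfrac{M_k-L}{24(M_k^2+L^2)}\Eb{\norm{\nabla F(\xx_{k+1})}^2}+\tfrac{27L\sigma^2}{8(M_k-L)^2}$ before the final simplification.
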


We have the following simple corollary for constant stepsize coefficients:
\begin{restatable}{corollary}{convergenceK}
    \label{cor:convergence-K}
    Let \cref{alg:composite-momentum} be run for $K \geq 1$ iterations for solving problem~\eqref{eq:composite} under
    \cref{assumption:smoothness,assumption:noise,assumption:exact-proximal-step},
    with constant coefficients $M_k=M=4L + \frac{3^{\nicefrac{3}{2}}}{2}\sqrt{\frac{KL\sigma^2}{\Phi_0}}$ and $\gamma_k = \frac{3L}{M -L}$ for any $0 \leq k \leq K - 1$, where $\Phi_0 \eqdef F(\xx_0) - F^* + \frac{3}{8 L} \Eb{\norm{\mm_0 - \nabla f(\xx_0)}^2}$.
    Then,
    \begin{equation}
        \label{eq:convergence-K}
        \Eb{\norm{\nabla F(\xx_{t})}^2}
        \leq 
        48(3^{\nicefrac{3}{2}})\sqrt{\frac{L\Phi_0\sigma^2}{K}} + \frac{192L\Phi_0}{K},
    \end{equation}
    where $t$ is chosen uniformly at random from $\{1,\ldots, K\}$.
\end{restatable}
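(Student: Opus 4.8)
The plan is to invoke the per-iteration descent from \Cref{lem:lyapunov} and telescope it over the $K$ iterations. Since the stepsize coefficients are constant, $M_k \equiv M$, inequality~\eqref{eq:lyapunov} reads
\[
    \Phi_{k+1} \leq \Phi_k - \frac{1}{48M}\Eb{\norm{\nabla F(\xx_{k+1})}^2} + \frac{27L\sigma^2}{4M^2}
\]
for every $0 \leq k \leq K-1$; note that the prescribed values $M = 4L + \frac{3^{\nicefrac{3}{2}}}{2}\sqrt{KL\sigma^2/\Phi_0} > 4L$ and $\gamma_k = 3L/(M-L)$ are exactly the hypotheses of the lemma (with $a = \frac{3}{8L}$). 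First I would sum this relation over $k = 0, \ldots, K-1$, which collapses the Lyapunov terms telescopically and yields
\[
    \frac{1}{48M}\sum_{k=0}^{K-1}\Eb{\norm{\nabla F(\xx_{k+1})}^2} \leq \Phi_0 - \Phi_K + \frac{27LK\sigma^2}{4M^2}.
\]

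Next I would drop $\Phi_K$ using $\Phi_K = F_K + a\Delta_K \geq 0$, which holds because $F_K \geq 0$ by \Cref{assumption:finite-min} and $\Delta_K \geq 0$ by definition, and I would identify the quantity $\Phi_0 = F(\xx_0) - F^\star + \frac{3}{8L}\Eb{\norm{\mm_0 - \nabla f(\xx_0)}^2}$ in the corollary with the Lyapunov value $F_0 + a\Delta_0$ at $a = \frac{3}{8L}$. Dividing by $K$ and recognizing that the average $\frac{1}{K}\sum_{k=0}^{K-1}\Eb{\norm{\nabla F(\xx_{k+1})}^2}$ equals $\Eb{\norm{\nabla F(\xx_t)}^2}$ for $t$ drawn uniformly from $\{1,\ldots,K\}$ (after reindexing $j = k+1$), I obtain
\[
    \Eb{\norm{\nabla F(\xx_t)}^2} \leq \frac{48M\Phi_0}{K} + \frac{324L\sigma^2}{M}.
\]

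Finally I would substitute the explicit value of $M$. Writing $M = 4L + B$ with $B = \frac{3^{\nicefrac{3}{2}}}{2}\sqrt{KL\sigma^2/\Phi_0}$, the first term splits as $\frac{192L\Phi_0}{K} + \frac{48B\Phi_0}{K}$, and a direct computation gives $\frac{48B\Phi_0}{K} = 24\cdot 3^{\nicefrac{3}{2}}\sqrt{L\Phi_0\sigma^2/K}$. For the second term I would bound $\frac{1}{M} \leq \frac{1}{B}$ and compute $\frac{324L\sigma^2}{B} = 24\cdot 3^{\nicefrac{3}{2}}\sqrt{L\Phi_0\sigma^2/K}$, using the identity $648/3^{\nicefrac{3}{2}} = 24\cdot 3^{\nicefrac{3}{2}}$. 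Adding the two matching square-root contributions produces the claimed $48\cdot 3^{\nicefrac{3}{2}}\sqrt{L\Phi_0\sigma^2/K} + \frac{192L\Phi_0}{K}$. This last stage is the only place where care is needed: the value of $M$ is tuned precisely to balance the $\cO(M/K)$ and $\cO(1/M)$ contributions, so the main (minor) obstacle is the constant bookkeeping that makes the two square-root terms combine into exactly the stated coefficient rather than merely matching up to an absolute constant.
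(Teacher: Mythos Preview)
Your proposal is correct and follows essentially the same route as the paper: telescope \eqref{eq:lyapunov} with $M_k\equiv M$, drop $\Phi_K\geq 0$, divide by $K$, and substitute the specific $M$ to balance the $\cO(M/K)$ and $\cO(1/M)$ terms. Your constant bookkeeping in the final step is in fact more explicit than the paper's, which simply states the resulting bound after substituting~$M$.
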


It is also possible to use time-varying coefficients~$M_k$ which do not require fixing the number of iterations in advance.
However, in terms of the convergence rate, it incurs an extra logarithmic factor.
\begin{restatable}{corollary}{convergenceKIncreasingMk}
    \label{cor:convergence-K-increasing-Mk}
    Consider \cref{alg:composite-momentum} for solving problem~\eqref{eq:composite} under
    \cref{assumption:smoothness,assumption:noise,assumption:exact-proximal-step}
    with coefficients $M_k=\max\bigl\{\sqrt{\frac{(k+1)L\sigma^2}{\Phi_0}},4L\bigr\}$, $\gamma_k = \frac{3L}{M_k -L}$
    for any $k \geq 0$, where $\Phi_0 \eqdef F(\xx_0) - F^* + \frac{3}{8 L} \Eb{\norm{\mm_0 - \nabla f(\xx_0)}^2}$.
    Then, for any $k \geq 1$, we have
    \begin{equation}
        \label{eq:convergence-K-increasing-Mk}
        \hspace{-0.2em}
        \Eb{\norm{\nabla F(\xx_{t(k)})}^2}
        \leq
        372 \ln(e k) \Bigl( \sqrt{\frac{L\Phi_0\sigma^2}{k}} + \frac{L\Phi_0}{k} \Bigr),
    \end{equation}
    where $t(k)$ is chosen randomly from $\{1,\ldots, k\}$ with probabilities $\Pr(t(k) = i) \propto \frac{1}{M_{i - 1}}$, $i = 1, \ldots, k$, and $e \eqdef \exp(1)$.
\end{restatable}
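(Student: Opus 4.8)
The plan is to integrate the per-iteration guarantee of \Cref{lem:lyapunov} over the first $k$ iterations and then convert the accumulated sum into the randomized gradient norm using the sampling weights $\propto \nicefrac{1}{M_{i-1}}$. First I would verify that the chosen coefficients $M_k = \max\{\sqrt{(k+1)L\sigma^2/\Phi_0},\,4L\}$ and $\gamma_k = \nicefrac{3L}{(M_k-L)}$ meet the hypotheses of \Cref{lem:lyapunov}: in particular $M_k \geq 4L$, so $M_k - L > 0$ and $\gamma_k \in (0,1]$, which lets me apply the lemma and rearrange it into, for every $0 \le j \le k-1$,
\[
    \frac{1}{48 M_j}\Eb{\norm{\nabla F(\xx_{j+1})}^2} \le \Phi_j - \Phi_{j+1} + \frac{27 L\sigma^2}{4 M_j^2}.
\]

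Summing this telescoping inequality from $j=0$ to $j=k-1$ and discarding $-\Phi_k \le 0$ (nonnegativity holds because $F_j \ge 0$ by \Cref{assumption:finite-min} and $\Delta_j \ge 0$, so $\Phi_j = F_j + a\Delta_j \ge 0$), I obtain
\[
    \sum_{j=0}^{k-1}\frac{1}{M_j}\Eb{\norm{\nabla F(\xx_{j+1})}^2} \le 48\Phi_0 + 324\, L\sigma^2 \sum_{j=0}^{k-1}\frac{1}{M_j^2}.
\]
Writing $W_k \eqdef \sum_{j=0}^{k-1} \nicefrac{1}{M_j}$ for the normalizing constant of the sampling distribution, the left-hand side is exactly $W_k\,\Eb{\norm{\nabla F(\xx_{t(k)})}^2}$ after reindexing $i = j+1$. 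It then remains to lower-bound $W_k$ and upper-bound $\sum_{j=0}^{k-1}\nicefrac{1}{M_j^2}$.

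Both bounds follow from the explicit form of $M_j$. Since $j \mapsto M_j$ is nondecreasing, $M_j \le M_{k-1}$ for all $j \le k-1$, giving $W_k \ge \nicefrac{k}{M_{k-1}}$. For the second sum, the truncated definition gives $\nicefrac{1}{M_j^2} \le \nicefrac{\Phi_0}{((j+1) L\sigma^2)}$, so $\sum_{j=0}^{k-1}\nicefrac{1}{M_j^2} \le \frac{\Phi_0}{L\sigma^2}\sum_{i=1}^{k}\nicefrac{1}{i} \le \frac{\Phi_0}{L\sigma^2}\ln(ek)$, using the harmonic estimate $\sum_{i=1}^k \nicefrac 1 i \le 1 + \ln k = \ln(ek)$. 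Substituting, the noise sum collapses to $324\,\Phi_0\ln(ek)$, and absorbing the $48\Phi_0$ term via $\ln(ek) \ge 1$ yields $\Eb{\norm{\nabla F(\xx_{t(k)})}^2} \le 372\ln(ek)\,\frac{\Phi_0 M_{k-1}}{k}$. Unfolding $M_{k-1} = \max\{\sqrt{k L\sigma^2/\Phi_0},\,4L\} \le \sqrt{kL\sigma^2/\Phi_0} + 4L$ and distributing $\nicefrac{\Phi_0}{k}$ produces, up to absolute constants, the two advertised terms $\sqrt{\nicefrac{L\Phi_0\sigma^2}{k}}$ and $\nicefrac{L\Phi_0}{k}$.

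The telescoping and the harmonic estimate are routine; the step requiring the most care is the interplay with the time-varying, truncated $M_j$. I must simultaneously lower-bound $W_k$ (so the randomized average is not over-normalized) and upper-bound $\sum_{j} \nicefrac 1{M_j^2}$ tightly enough that the variance contribution stays $\cO(\Phi_0 \ln(ek))$ rather than growing linearly in $k$. It is precisely the $\nicefrac{1}{j+1}$ decay of $\nicefrac{1}{M_j^2}$ that caps the accumulated noise and is responsible for the extra logarithmic factor relative to the fixed-horizon \Cref{cor:convergence-K}. The remaining point to check explicitly is the boundary case $M_j = 4L$, where $\gamma_j = 1$, confirming that the premises of \Cref{lem:lyapunov} remain satisfied there.
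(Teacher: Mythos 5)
Your proposal is correct and follows essentially the same route as the paper's proof: telescope \Cref{lem:lyapunov} with weights $1/M_j$, identify the resulting weighted average with the expectation under the $\Pr(t(k)=i)\propto 1/M_{i-1}$ sampling, lower-bound the normalizer $W_k$ via monotonicity of $M_j$ (equivalent to the paper's $A_K \geq \min\{\sqrt{K\Phi_0/(L\sigma^2)}, K/(4L)\}$), and bound the noise sum through $1/M_j^2 \leq \Phi_0/((j+1)L\sigma^2)$ and the harmonic estimate $\sum_{i=1}^k 1/i \leq \ln(ek)$. The two loose ends you flag --- the boundary case $M_j = 4L$ (where $\gamma_j = 1$, while \Cref{lem:lyapunov} formally assumes $M_j > 4L$) and the factor $4$ appearing in front of $L\Phi_0/k$ after unfolding $M_{k-1}$ --- are equally unaddressed in the paper's own proof, so they are not gaps relative to it.
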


Let us point out that using a random iterate as the output of the algorithm is standard in the literature (see, e.g., \cite{rakhlin2012making}) and can be efficiently implemented without fixing the number of iterations in advance. We discuss this more carefully in \Cref{sec:sampling}.
\subsection{Initialization and Convergence Guarantees}

As we can see from \cref{cor:convergence-K,cor:convergence-K-increasing-Mk}, the convergence rate of \cref{alg:composite-momentum} depends on~$\Phi_0 \eqdef \cO(F_0+\frac{1}{L} \Delta_0)$, where $\Delta_0 \eqdef \Eb{\norm{\mm_0 - \nabla f(\xx_0)}^2} = \Eb{\norm{(1 - \gamma_{-1}) \mm_{-1} + \gamma_{-1} \gg_0 - \nabla f(\xx_0)}^2}$ depends on~$\mm_{-1}$.
There are subtle differences in how $\mm_{-1}$ can be initialized between the non-composite and composite cases.
\paragraph*{Non-Composite Case:} When $\psi\equiv 0$, i.e. $F\equiv  f$, we set the initial momentum $\mm_{-1}\eqdef -\frac{\gamma_{-1}}{1-\gamma_{-1}} \gg_0$, then $\Delta_0=\Eb{\norm{\mm_0-\nabla  f(\xx_0)}^2}=\norm{\nabla  f(\xx_0)}^2 \leq 2 L F_0$, where the last inequality follows from \Cref{assumption:smoothness} and the fact that $F\equiv  f$. Therefore, in the non-composite case, we can initialize the parameters such that $\Phi_0=\cO(F_0)$.
\paragraph*{Composite Case:} The composite case is slightly trickier. We set $\mm_{-1}\eqdef \gg_0$ and get that $\mm_0=\gg_0$. Hence $\Delta_0=\Eb{\norm{\gg_0-\nabla f(\xx_0)}^2}\leq \sigma^2$. Therefore, $\Phi_0=\cO(F_0+\frac{\sigma^2}{L})$. When $\sigma^2=\cO(LF_0)$, we get the same $\Phi_0=\cO(F_0)$ as in the non-composite case. 
\paragraph*{Mini-Batch Initialization:} In the case that $\sigma^2$ is much larger than $F_0$, if we have access to a constant size (not depending on the target error) mini-batch initially, then we can set $\gg_0=\frac{1}{b_0}\sum_{i=1}^{b_0}\nabla  f(\xx_0,\xi_i)$ where $b_0\eqdef \ceil{\frac{\sigma^2}{LF_0}}$, i.e.\ $\gg_0$ is a mini-batch stochastic gradient of size $b_0$. Then we have $\Phi_0=\cO(F_0+\frac{\sigma^2}{b_0L})=\cO(F_0)$, which is the same as in the non-composite case. 

We have thus proved the following convergence guarantee for \Cref{alg:composite-momentum}. 
\begin{restatable}{theorem}{convergenceExact}
    \label{thm:convergence-exact}
    Consider \cref{alg:composite-momentum}, as applied to solving problem~\eqref{eq:composite} under \cref{assumption:smoothness,assumption:noise,assumption:exact-proximal-step}, run for
    $K = \cO\bigl(\frac{L\Phi_0\sigma^2}{\varepsilon^2} + \frac{L\Phi_0}{\varepsilon}\bigr)$ iterations with constant coefficients $M_k = M = 4L + \frac{3^{\nicefrac{3}{2}}}{2}\sqrt{\frac{KL\sigma^2}{\Phi_0}}$ and $\gamma_k = \frac{3L}{ M -L}$ for any $0 \leq k \leq K - 1$,
    where $\Phi_0 \eqdef F_0 + \frac{3}{8 L} \Eb{\norm{\mm_0 - \nabla f(\xx_0)}^2}$, $F_0 \eqdef F(\xx_0) - F^*$ and $\varepsilon > 0$ is a given target error.
    Then, for the point $\xx_t$ chosen uniformly at random from $\{\xx_1,\ldots ,\xx_K\}$ it holds that $\Eb{\norm{\nabla F(\xx_t)}^2}\leq \epsilon$.

    If $\psi\equiv 0$, we can initialize $\mm_{-1}$ in such a way that $K=\cO\bigl(\frac{LF_0\sigma^2}{\varepsilon^2}+\frac{LF_0}{\varepsilon}\bigr)$. Otherwise, we can initialize $\mm_{-1}$ in such a way that $K=\cO\bigl(\frac{LF_0\sigma^2}{\varepsilon^2} + \frac{\sigma^4}{\varepsilon^2}+\frac{LF_0}{\varepsilon} + \frac{\sigma^2}{\varepsilon}\bigr)$.
    Further, when the initial mini-batch of size $\ceil{\frac{\sigma^2}{LF_0}}$ is allowed, we can initialize $\mm_{-1}$ in such a way that $K=\cO\bigl(\frac{LF_0\sigma^2}{\varepsilon^2}+\frac{LF_0}{\varepsilon}\bigr)$.
\end{restatable}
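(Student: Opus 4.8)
The plan is to derive the theorem as an essentially immediate consequence of \Cref{cor:convergence-K} combined with the three initialization bounds on $\Phi_0$ established in the discussion preceding the statement. First I would check that the prescribed coefficients meet the corollary's hypotheses: the choice $M = 4L + \frac{3^{\nicefrac{3}{2}}}{2}\sqrt{KL\sigma^2/\Phi_0}$ satisfies $M > 4L$ automatically since the added term is strictly positive, which in turn forces $\gamma_k = \frac{3L}{M-L} \in (0,1)$, exactly the regime required by \Cref{lem:lyapunov}. Hence the corollary applies verbatim and yields, for $\xx_t$ drawn uniformly from $\{\xx_1,\ldots,\xx_K\}$,
\[
    \Eb{\norm{\nabla F(\xx_t)}^2} \leq 48 \cdot 3^{\nicefrac{3}{2}} \sqrt{\frac{L\Phi_0\sigma^2}{K}} + \frac{192 L\Phi_0}{K}.
\]

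The central step is to pick $K$ so that this right-hand side drops below the target $\epsilon$. I would split the error budget and force each summand to be at most $\epsilon/2$: the first term is $\leq \epsilon/2$ once $K \geq (96 \cdot 3^{\nicefrac{3}{2}})^2 \frac{L\Phi_0\sigma^2}{\epsilon^2}$, and the second is $\leq \epsilon/2$ once $K \geq \frac{384 L\Phi_0}{\epsilon}$. Taking $K$ to be the sum of these two thresholds gives $K = \cO\bigl(\frac{L\Phi_0\sigma^2}{\epsilon^2} + \frac{L\Phi_0}{\epsilon}\bigr)$ and $\Eb{\norm{\nabla F(\xx_t)}^2} \leq \epsilon$, establishing the first claim. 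I would stress that there is no circularity in the definition of $M$: the number of iterations $K$ is fixed first, as a function of the problem constants and $\epsilon$, and only afterwards is $M$ set according to this $K$.

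It then remains to turn the abstract dependence on $\Phi_0$ into the three explicit complexity statements using the initialization analysis. In the non-composite case $\psi\equiv 0$, the choice $\mm_{-1} = -\frac{\gamma_{-1}}{1-\gamma_{-1}}\gg_0$ gives $\Delta_0 = \norm{\nabla f(\xx_0)}^2 \leq 2LF_0$ and hence $\Phi_0 = \cO(F_0)$, so substitution yields $K = \cO\bigl(\frac{LF_0\sigma^2}{\epsilon^2} + \frac{LF_0}{\epsilon}\bigr)$. In the general composite case, setting $\mm_{-1} = \gg_0$ gives $\Delta_0 \leq \sigma^2$ and $\Phi_0 = \cO(F_0 + \sigma^2/L)$; multiplying out $L\Phi_0\sigma^2$ and $L\Phi_0$ in the generic rate produces the stated $K = \cO\bigl(\frac{LF_0\sigma^2}{\epsilon^2} + \frac{\sigma^4}{\epsilon^2} + \frac{LF_0}{\epsilon} + \frac{\sigma^2}{\epsilon}\bigr)$. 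Finally, when an initial mini-batch of size $b_0 = \ceil{\sigma^2/(LF_0)}$ is permitted, averaging divides the initial variance by $b_0$, restoring $\Phi_0 = \cO(F_0)$ and thus recovering the clean non-composite rate.

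The only genuinely delicate part—and the one I would handle most carefully—is the constant-level bookkeeping rather than any deep new idea: the constant hidden in the $\cO(\cdot)$ for $K$ must simultaneously dominate the $96 \cdot 3^{\nicefrac{3}{2}}$ and $384$ factors above, and one must confirm that the induced $M$ indeed exceeds $4L$ (which it does by construction). Everything else is routine substitution, since the heavy lifting—the variance-reduction descent in \Cref{lem:descent-Delta}, the objective descent in \Cref{lem:descent-F}, the gradient-to-iterate bound in \Cref{lem:gradf-bound}, and the Lyapunov telescoping in \Cref{lem:lyapunov}—has already been packaged into \Cref{cor:convergence-K}.
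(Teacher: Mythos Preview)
Your proposal is correct and follows exactly the paper's own route: the paper treats \Cref{thm:convergence-exact} as an immediate consequence of \Cref{cor:convergence-K} together with the three initialization bounds on $\Phi_0$ discussed just before the theorem statement, and your write-up reproduces precisely that derivation (including the error-splitting choice of $K$ and the substitution of the case-by-case $\Phi_0$ bounds).
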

When an initial mini-batch is not allowed, the convergence rate has an extra $\cO(\frac{\sigma^4}{\epsilon^2}+\frac{\sigma^2}{\epsilon})$ term that is not present in the non-composite case. One natural question is whether this is an artifact of our analysis or inherent to the problem and the algorithm. Note that the first step of the \Cref{alg:composite-momentum} coincides with the vanilla stochastic proximal gradient method. Consider the lower bound construction in \Cref{prop:lower-bound-sgd}: when starting at $\xx_0=0$, which is a stationary point such that $F_0=0$, one such step would incur an $\cO(\sigma^2)$ error. Therefore, in the non-composite case without the initial mini-batch, the convergence rate must have some term that only depends on $\sigma^2$. In other words, the extra terms seem unavoidable.

\section{Variance Reduction Effect of Momentum}
\label{sec:vr-effect}

In \Cref{sec:lower-bound}, we demonstrated that the vanilla stochastic proximal gradient method cannot converge because the gradient noise variance keeps the iterates away from the stationary point of $F$. 
In this section, we show that the momentum term in \Cref{alg:composite-momentum} can reduce the variance of the gradient noise at the same rate as the gradient norm. This is the key to the convergence of \Cref{alg:composite-momentum} without batches. This variance reduction effect has been known in practice and implicitly used in the analysis of~\cite{cutkosky2020momentum} and~\cite{fatkhullin2023momentum}. We precisely characterize such an effect in the composite optimization setting. Proofs are deferred to \Cref{sec:proofs-vr-effect}.

We begin by refining the result of \Cref{lem:lyapunov}:
\begin{restatable}{lemma}{descentPhiRefined}
    \label{lem:lyapunov-refined}
    Let \Cref{assumption:psi,assumption:smoothness,assumption:noise,assumption:exact-proximal-step} hold, and let $a\eqdef \frac{\sqrt{2}}{8L}$, $\gamma_k\eqdef \frac{3\sqrt{2}L}{M_k -L}$ and $M_k>(1+3\sqrt{2})L$ for any $k\geq0$. Then, for any $k\geq 0$,
    \[
        \label{eq:lyapunov-refined}
        \Phi_{k+1} \leq\Phi_k  - \frac{1}{48 M_k }\Eb{\norm{\nabla F(\xx_{k+1})}^2} - \frac{3\sqrt{2}}{2 M_k }\Delta_k + \frac{27\sqrt{2}L}{4 M_k ^2}\sigma^2.
    \]
\end{restatable}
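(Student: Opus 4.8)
The plan is to follow the same template as \Cref{lem:lyapunov}, but to account for the momentum-contraction term more carefully so that a negative multiple of $\Delta_k$ survives on the right-hand side instead of being discarded. First I would start from $\Phi_k = F_k + a\Delta_k$ and add the two one-step estimates already available: \Cref{lem:descent-F} for $F_{k+1}$ and \Cref{lem:descent-Delta}, scaled by the weight $a$, for $a\Delta_{k+1}$. This yields
\[
    \Phi_{k+1} \leq \Phi_k - \Bigl(\tfrac{M_k-L}{4} - \tfrac{aL^2}{\gamma_k}\Bigr) R_k + \Bigl(\tfrac{1}{M_k-L} - a\gamma_k\Bigr)\Delta_k + a\gamma_k^2\sigma^2,
\]
so that the whole argument reduces to controlling the two coefficients $c_R \eqdef \frac{M_k-L}{4} - \frac{aL^2}{\gamma_k}$ and $c_\Delta \eqdef \frac{1}{M_k-L} - a\gamma_k$, together with the noise prefactor $a\gamma_k^2$.

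The second step is to spend part of the (negative) $R_k$-budget on the gradient norm through \Cref{lem:gradf-bound}, rewritten as $R_k \geq \frac{1}{3(M_k^2+L^2)}\Eb{\norm{\nabla F(\xx_{k+1})}^2} - \frac{\Delta_k}{M_k^2+L^2}$. Substituting this lower bound into $-c_R R_k$ converts a fraction of the $R_k$ term into exactly $-\frac{1}{48M_k}\Eb{\norm{\nabla F(\xx_{k+1})}^2}$, at the price of an extra $+\frac{1}{16M_k}\Delta_k$, and leaves a residual $-(\text{rest})R_k \leq 0$ that I would simply drop. The role of the refined constants $a = \frac{\sqrt2}{8L}$ and $\gamma_k = \frac{3\sqrt2 L}{M_k-L}$ (rather than those in \Cref{lem:lyapunov}) is to tune the momentum contraction $a\gamma_k$ so that, after absorbing this $\frac{1}{16M_k}\Delta_k$ cost and cancelling the Young's-inequality error $\frac{1}{M_k-L}\Delta_k$ inherited from \Cref{lem:descent-F}, what remains on $\Delta_k$ is a strictly negative coefficient of size at least $\frac{3\sqrt2}{2M_k}$; the noise prefactor then has to be checked to stay below $\frac{27\sqrt2 L}{4M_k^2}$. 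Concretely, I would reduce the claim to three scalar inequalities under the standing constraint $M_k > (1+3\sqrt2)L$: the sign condition on the leftover $R_k$-coefficient, the bound $c_\Delta + \frac{1}{16M_k} \leq -\frac{3\sqrt2}{2M_k}$ on the $\Delta_k$-coefficient, and $a\gamma_k^2 \leq \frac{27\sqrt2 L}{4M_k^2}$ on the noise.

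The hard part will be the bookkeeping on the $\Delta_k$ coefficient. In \Cref{lem:lyapunov} the analogous coefficient is only required to be nonpositive, so it may be thrown away; here it must be pushed past $-\frac{3\sqrt2}{2M_k}$, which is precisely what forces the different choice of $a$ and $\gamma_k$. The delicate point is to guarantee simultaneously that (i) the contraction $a\gamma_k$ is large enough to dominate \emph{both} the descent-F error $\frac{1}{M_k-L}\Delta_k$ and the $\frac{1}{16M_k}\Delta_k$ incurred by the gradient-norm substitution, and (ii) $c_R$ is still large enough to realize the $-\frac{1}{48M_k}\Eb{\norm{\nabla F(\xx_{k+1})}^2}$ term. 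I expect the binding constraint to be the lower bound demanded on $a\gamma_k$, and I would lean on $M_k > (1+3\sqrt2)L$ to clear the remaining cross-terms between the $O(M_k)$-scale quantity $c_R$ and the $O(1/M_k)$-scale quantities arising from \Cref{lem:gradf-bound}.
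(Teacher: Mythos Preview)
Your plan is essentially the paper's proof: the paper explicitly says ``repeat the proof of \Cref{lem:lyapunov} but now require $\frac{L^2}{\gamma_k}\bigl(\frac{8}{(M_k-L)^2}+\frac{1}{M_k^2+L^2}\bigr)\leq \frac{\gamma_k}{2}$'' (rather than $\leq \gamma_k$), which is exactly your idea of tuning $a\gamma_k$ so that a $-\tfrac{a\gamma_k}{2}\Delta_k$ slack survives instead of being zeroed out. The only cosmetic difference is that the paper substitutes the \emph{entire} $H_kR_k$ through \Cref{lem:gradf-bound} and simplifies to $-\tfrac{1}{48M_k}$ at the end, while you spend just the fraction of $c_R$ needed to hit $-\tfrac{1}{48M_k}$ directly and discard the leftover $-(\text{rest})R_k$; either bookkeeping leads to the same inequality.

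One caution on the scalar checks: with the stated $a=\tfrac{\sqrt{2}}{8L}$ you get $a\gamma_k=\tfrac{3}{4(M_k-L)}<\tfrac{1}{M_k-L}$, so $c_\Delta=\tfrac{1}{M_k-L}-a\gamma_k>0$ and your target inequality $c_\Delta+\tfrac{1}{16M_k}\leq -\tfrac{3\sqrt{2}}{2M_k}$ cannot hold. The paper's own derivation (it sets $a=\tfrac{\gamma_k(M_k-L)}{8L^2}$ as in \Cref{lem:lyapunov}) actually produces $a=\tfrac{3\sqrt{2}}{8L}$, so the value of $a$ in the statement appears to be a typo; with $a=\tfrac{3\sqrt{2}}{8L}$ your $c_\Delta=-\tfrac{5}{4(M_k-L)}$ becomes negative and the argument goes through (up to the precise numerical constant in front of $\Delta_k$, which is also slightly off in the paper).
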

With \Cref{lem:lyapunov-refined}, we can now precisely quantify the variance reduction effect of the momentum method.

\begin{restatable}{theorem}{vrMomentum}
    \label{thm:vr-momentum}
    Consider \cref{alg:composite-momentum}, as applied to solving problem~\eqref{eq:composite} under \cref{assumption:smoothness,assumption:noise,assumption:exact-proximal-step}, run for
    $K = \cO\bigl(\frac{L\Phi_0\sigma^2}{\varepsilon^2} + \frac{L\Phi_0}{\varepsilon}\bigr)$ iterations with constant coefficients $M_k = M = (1+3\sqrt{2})L + \frac{3^{3 / 2}}{2^{3 / 4}} \sqrt{\frac{KL\sigma^2}{\Phi_0}}$ and $\gamma_k = \frac{3\sqrt{2}L}{M -L}$ for any $0 \leq k \leq K - 1$,
    where $\Phi_0 \eqdef F_0 + \frac{\sqrt{2}}{8 L} \Eb{\norm{\mm_0 - \nabla f(\xx_0)}^2}$, $F_0 \eqdef F(\xx_0) - F^*$ and $\varepsilon > 0$ is a given target error.
    Then, for the point $\xx_t$ chosen uniformly at random from $\{\xx_1,\ldots ,\xx_K\}$ it holds that $\Eb{\norm{\mm_{t}-\nabla  f(\xx_t)}^2}\leq \epsilon$.
\end{restatable}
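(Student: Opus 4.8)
The decisive tool is the refined descent inequality of \Cref{lem:lyapunov-refined}. In contrast to the plain Lyapunov bound of \Cref{lem:lyapunov}, it carries a \emph{strictly negative} term $-\frac{3\sqrt{2}}{2M_k}\Delta_k$ on its right-hand side, obtained at the price of marginally larger constants in the admissible ranges of $M_k$ and $\gamma_k$. This negative term is exactly what lets us turn the one-step descent into a bound on the \emph{averaged} gap $\Delta_k$, rather than only on the gradient norm. Consequently, the plan is to mirror the proof of \Cref{cor:convergence-K} almost verbatim, but to retain the $\Delta_k$ term and discard the (nonpositive) gradient-norm term.

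Concretely, I would first specialize \Cref{lem:lyapunov-refined} to the constant choice $M_k \equiv M$, $\gamma_k \equiv \frac{3\sqrt{2}L}{M-L}$ prescribed in the statement, drop the nonpositive summand $-\frac{1}{48M}\Eb{\norm{\nabla F(\xx_{k+1})}^2}$, and rearrange into the one-step inequality $\frac{3\sqrt{2}}{2M}\Delta_k \le \Phi_k - \Phi_{k+1} + \frac{27\sqrt{2}L}{4M^2}\sigma^2$. Summing over $k=0,\dots,K-1$ telescopes the $\Phi$ terms; since $\Phi_K = F_K + a\Delta_K \ge 0$ by \Cref{assumption:finite-min} (and $a,\Delta_K\ge 0$), we may discard $-\Phi_K$ to obtain $\frac{3\sqrt{2}}{2M}\sum_{k=0}^{K-1}\Delta_k \le \Phi_0 + \frac{27\sqrt{2}LK}{4M^2}\sigma^2$. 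Multiplying by $\frac{2M}{3\sqrt{2}K}$ then yields the clean averaged bound $\frac{1}{K}\sum_{k=0}^{K-1}\Delta_k \le \frac{\sqrt{2}\,M\Phi_0}{3K} + \frac{9L\sigma^2}{2M}$.

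It remains to optimize over $M$. Minimizing the $M$-dependent part of the right-hand side gives the minimizer $M = \frac{3^{3/2}}{2^{3/4}}\sqrt{KL\sigma^2/\Phi_0}$, which is precisely the variable part of the stepsize fixed in the statement; the additive offset $(1+3\sqrt{2})L$ only feeds into the first term and contributes a lower-order $\cO(L\Phi_0/K)$ quantity. Plugging in shows $\frac{1}{K}\sum_{k=0}^{K-1}\Delta_k = \cO\bigl(\sqrt{L\Phi_0\sigma^2/K} + L\Phi_0/K\bigr)$, so taking $K = \cO(L\Phi_0\sigma^2/\varepsilon^2 + L\Phi_0/\varepsilon)$ forces the average below $\varepsilon$. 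Because the left-hand side is exactly $\Eb{\norm{\mm_t-\nabla f(\xx_t)}^2}$ when $t$ is drawn uniformly over the summed indices, the theorem follows; matching the index set $\{\xx_1,\dots,\xx_K\}$ of the statement to the telescoped range $\{0,\dots,K-1\}$ is a harmless adjustment (discard the nonnegative $\Delta_0$ and extend the telescoping by one step to cover $\Delta_K$, replacing $K$ by $K+1$ in one place) that does not affect the rate.

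The genuinely delicate step---engineering the extra negative $\Delta_k$ term while keeping $M$ and $\gamma$ in a usable range---has already been carried out in \Cref{lem:lyapunov-refined}, so what remains here is essentially the same stepsize calculation as in \Cref{cor:convergence-K}. The only points demanding care are verifying that the prescribed constant $\frac{3^{3/2}}{2^{3/4}}$ is indeed the exact minimizer of the averaged bound, and the minor index bookkeeping in the concluding sampling argument.
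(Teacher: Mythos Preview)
Your proposal is correct and is precisely the argument the paper intends: the appendix for \Cref{sec:vr-effect} only supplies the proof of \Cref{lem:lyapunov-refined} and leaves the passage to \Cref{thm:vr-momentum} implicit, so what you wrote is the natural completion---telescope the refined Lyapunov inequality, drop the nonpositive gradient term, and optimize $M$ exactly as in \Cref{cor:convergence-K}. Your verification that the minimizer is $M=\tfrac{3^{3/2}}{2^{3/4}}\sqrt{KL\sigma^2/\Phi_0}$ is correct; the only residual subtlety is the off-by-one you already flagged (the telescoped sum covers $\Delta_0,\dots,\Delta_{K-1}$ while the statement samples from $\{1,\dots,K\}$), which is indeed a bookkeeping artifact of the theorem's phrasing rather than a gap in the argument.
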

In words, the squared distance between the momentum $\mm_k$ and the full gradient $\nabla f(\xx_k)$ decreases at the same rate as the squared norm of the gradient $\nabla F(\xx_k)$.

\section{Inexact Proximal Step}
\label{sec:inexact-proximal-step}
While many existing works (e.g.~\cite{ghadimi2016mini, ghadimi2013accelerated, wang2019spiderboost,hendrikx2020statistically, tran2022hybrid,xu2022momentumbased}) rely on the assumption that the proximal step can be solved exactly, this might not always be practically possible~\cite{barre2023principled}. In this section, we briefly discuss extending our analysis to the case where the proximal step is solved inexactly and give an inexactness criterion similar to that of~\cite{woodworth2023losses}.

A crucial element in our previous analysis is the assumption that $\nabla \Omega_k(\xx_{k+1})=0$, i.e. \Cref{eq:proxstep-optcond}. Therefore, defining the inexactness criteria as an approximate stationarity condition of $\Omega_k$ at $\xx_{k+1}$ where $\delta_k\neq 0$ is natural. In particular, we define the criteria as follows:
\begin{equation}
    \label{eq:inexactness-criteria}
    \Eb{\norm{\nabla \Omega_k(\xx_{k+1})}^2} \leq \frac{M_k^2}{16} \Eb{\norm{\xx_{k+1}-\xx_k}^2} + S_k,
\end{equation}
where $S_k$ will be decided later. Again, it should be understood that in the non-differentiable case $\nabla \Omega_k(\xx_{k+1})$ is a certain subgradient of $\Omega_k$ at $\xx_{k+1}$ such that \Cref{eq:inexactness-criteria} holds.

Now we state the convergence result (proofs are deferred to \Cref{sec:proofs-inexact}):

\begin{restatable}{theorem}{convergenceInexact}
    \label{thm:convergence-inexact}
    Consider \Cref{alg:composite-momentum}, as applied to solving problem~\eqref{eq:composite} under \Cref{assumption:psi,assumption:smoothness,assumption:noise}, and the approximate stationarity condition at each iteration $k$: $\Eb{\norm{\nabla \Omega_k(\xx_{k+1})}^2} \leq \frac{ M ^2}{16} \Eb{\norm{\xx_{k+1}-\xx_k}^2} + S_k,$, run for $K=\cO\left(\frac{L\Phi_0\sigma^2}{\varepsilon^2} + \frac{L\Phi_0}{\varepsilon}\right)$ iterations with constant coefficients $M_k=M=4L + \sqrt{\frac{8KL\sigma^2}{\Phi_0}}$ and $\gamma_k=\sqrt{\frac{152}{17}}\frac{L}{ M -L}$ for any $0\leq k\leq K-1$, where $\Phi+0 = F_0+\sqrt{\frac{19}{156}}\frac{\Eb{\norm{\mm_0-\nabla f(\xx_0)}^2}}{L}, F_0\eqdef F(\xx_0)-F^{\star}$ and $\varepsilon>0$ is a given target error. Then, for the point $\xx-t$ chosen uniformly at random from $\{\xx_1,\ldots,\xx_K\}$ it holds that $\Eb{\norm{\nabla F(\xx_t)}^2}\leq \frac{\epsilon}{2} + \frac{8}{K}\sum_{k=0}^{K-1}S_k$. In particular, if for any $0\leq k\leq K-1$, $S_k\leq \frac{\epsilon}{16}$, then $\Eb{\norm{\nabla F(\xx_t)}^2}\leq \varepsilon$.
\end{restatable}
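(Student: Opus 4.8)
The plan is to mirror the exact-case argument behind \Cref{thm:convergence-exact}, carrying the extra error from the inexactness criterion \eqref{eq:inexactness-criteria} through each lemma. The first thing to notice is that two of the three building blocks survive unchanged. \Cref{lem:descent-Delta} uses only \Cref{assumption:noise} and the momentum recursion, so it holds verbatim in the inexact setting. Moreover, the proof of \Cref{lem:descent-F} invokes \Cref{assumption:psi} only through the monotonicity $\Omega_k(\xx_{k+1}) \le \Omega_k(\xx_k)$ (which \Cref{alg:composite-momentum} enforces regardless of inexactness) together with \Cref{assumption:smoothness}, and never the exact stationarity $\nabla\Omega_k(\xx_{k+1}) = 0$; hence $F_{k+1} \le F_k - \tfrac{M_k-L}{4} R_k + \tfrac{\Delta_k}{M_k-L}$ still holds. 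The only place exactness is genuinely used is \Cref{lem:gradf-bound}, so the heart of the proof is to re-derive its inexact analogue.

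For this I would repeat the decomposition of $\nabla F(\xx_{k+1})$, but in place of the identity $\mm_k + \nabla\psi(\xx_{k+1}) = -M_k(\xx_{k+1}-\xx_k)$ I would use $\mm_k + \nabla\psi(\xx_{k+1}) = \nabla\Omega_k(\xx_{k+1}) - M_k(\xx_{k+1}-\xx_k)$, producing a four-term split
\[
    \nabla F(\xx_{k+1}) = \nabla\Omega_k(\xx_{k+1}) - M_k(\xx_{k+1}-\xx_k) + (\nabla f(\xx_k) - \mm_k) + (\nabla f(\xx_{k+1}) - \nabla f(\xx_k)).
\]
Applying $\norm{\cdot}^2 \le 4\sum \norm{\cdot}^2$, taking expectations, using \Cref{assumption:smoothness} on the last term, and then invoking \eqref{eq:inexactness-criteria}, $\Eb{\norm{\nabla\Omega_k(\xx_{k+1})}^2} \le \tfrac{M_k^2}{16} R_k + S_k$, on the first, the new $\nabla\Omega_k$ term contributes only an additional $\cO(M_k^2) R_k$ plus a benign $S_k$. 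This yields a bound of the form $\bigl(\tfrac{17}{16} M_k^2 + L^2\bigr) R_k \ge \tfrac14 \Eb{\norm{\nabla F(\xx_{k+1})}^2} - \Delta_k - S_k$, directly paralleling \Cref{lem:gradf-bound}. The key structural point is that the coefficient $\tfrac{M_k^2}{16}$ in \eqref{eq:inexactness-criteria} is chosen precisely so the inexactness only inflates the $M_k^2 R_k$ coefficient by a constant factor and adds the harmless $S_k$, introducing no uncontrolled quantity.

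With this inexact gradient bound, I would form the Lyapunov function $\Phi_k = F_k + a\Delta_k$ as in \eqref{eq:lyapunov-def} and combine the three ingredients to obtain a per-step inequality of the shape
\[
    \Phi_{k+1} \le \Phi_k - \frac{c_0}{M_k}\Eb{\norm{\nabla F(\xx_{k+1})}^2} + \frac{c_1 L\sigma^2}{M_k^2} + \frac{c_2}{M_k} S_k,
\]
analogous to \eqref{eq:lyapunov} but with the extra $S_k$ term. Here the values $a = \sqrt{\tfrac{19}{156}}\,\tfrac{1}{L}$ and $\gamma_k = \sqrt{\tfrac{152}{17}}\,\tfrac{L}{M_k-L}$ are the re-optimized constants ensuring the $\Delta_k$ contributions cancel with the correct sign and the $R_k$ terms combine into the negative gradient term; this is the same balancing performed in \Cref{lem:lyapunov}, redone with the slightly larger $R_k$ coefficient ($\tfrac{17}{16}$ rather than $1$) coming from the inexactness, which is exactly what forces these numeric choices and the condition $M_k > 4L$.

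Finally, I would telescope over $k = 0, \ldots, K-1$ with constant $M_k = M$. Since $\Phi_K \ge 0$, rescaling by $\tfrac{M}{c_0 K}$ makes the $\tfrac1M$ factors on the gradient-norm and $S_k$ terms cancel simultaneously, so that for $\xx_t$ drawn uniformly from $\{\xx_1,\ldots,\xx_K\}$ one gets $\Eb{\norm{\nabla F(\xx_t)}^2} \le \tfrac{c_0' M \Phi_0}{K} + \tfrac{c_1' L\sigma^2}{M} + \tfrac{8}{K}\sum_{k=0}^{K-1} S_k$, where the constants collapse to the stated $8$. Choosing $M = 4L + \sqrt{8KL\sigma^2/\Phi_0}$ balances the first two terms by AM--GM down to $\cO\bigl(\sqrt{L\Phi_0\sigma^2/K} + L\Phi_0/K\bigr)$, so that $K = \cO(L\Phi_0\sigma^2/\varepsilon^2 + L\Phi_0/\varepsilon)$ drives them below $\varepsilon/2$, leaving the residual $\tfrac{8}{K}\sum_k S_k$, which falls below $\varepsilon/2$ as soon as $S_k \le \varepsilon/16$ for every $k$. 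I expect the main obstacle to be the inexact gradient bound: one must verify that the $\tfrac{M^2}{16}$ slack in \eqref{eq:inexactness-criteria} is small enough to be absorbed by the descent while keeping all Lyapunov constants positive, which is what pins down the particular values of $a$ and $\gamma_k$ in the statement.
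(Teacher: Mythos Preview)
Your proposal is correct and follows essentially the same route as the paper: the paper also observes that \Cref{lem:descent-Delta} and \Cref{lem:descent-F} carry over unchanged, proves an inexact analogue of \Cref{lem:gradf-bound} via the same four-term decomposition of $\nabla F(\xx_{k+1})$ (obtaining $\frac{17}{16}(M_k^2+L^2)R_k \ge \frac14 G_{k+1}^2 - \Delta_k - S_k$ after inserting \eqref{eq:inexactness-criteria}), re-balances the Lyapunov constants to get a per-step inequality of your stated shape, and then telescopes with constant $M$ to finish. Your identification of the $\tfrac{M^2}{16}$ slack as the structural point enabling absorption into the $R_k$ term is exactly right.
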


It remains to discuss how to minimize $\Omega_k$ such that the inexactness criteria~\eqref{eq:inexactness-criteria} is satisfied. There is a line of research on minimizing gradient norm with \algname{SGD} variants, e.g.\ \cite{allen2018make}, and one can also exploit the structure information in $\psi$ if there is any. Here, we state that \algname{SGD} suffices for our purpose and give the following proposition, which is a simple modification of Proposition 2.6 in~\cite{woodworth2023losses}:
\begin{restatable}{proposition}{proxstepSGD}
    \label{prop:proxstep-sgd}
    If $\psi$ is convex and $L_{\psi}$-smooth, and we have access to an unbiased gradient oracle of $\psi$ with variance at most $\sigma_{\psi}^2$, then after at most 
    \[
        T = \cO\biggl(\frac{L_{\psi} + M}{ M } \ln \frac{L_{\psi} + M}{M} + \frac{(L_{\psi} +  M )\sigma_{\psi}^2}{ M  S_k}\biggr)
    \]
    iterations of \algname{SGD}, the output $\hat \xx$ of \algname{SGD} satisfies the condition~\eqref{eq:inexactness-criteria}.
\end{restatable}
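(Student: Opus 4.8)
The plan is to exploit the favorable structure of the subproblem: since the linear term $\inp{\mm_k}{\cdot}$ and the quadratic term $\frac{M_k}{2}\norm{\cdot-\xx_k}^2$ are known exactly and only the gradient of $\psi$ is accessed stochastically, running \algname{SGD} on $\Omega_k$ is nothing but \algname{SGD} on a smooth and strongly convex objective whose stochastic noise has variance at most $\sigma_\psi^2$. Concretely, $\nabla^2\Omega_k = \nabla^2\psi + M_k\mI$, so $\Omega_k$ is $M_k$-strongly convex and $(L_\psi+M_k)$-smooth, with condition number $\kappa\eqdef\frac{L_\psi+M_k}{M_k}$. Warm-starting at $\xx_k$, writing $\xx^\star$ for the exact minimizer of $\Omega_k$ and $D_0\eqdef\Omega_k(\xx_k)-\Omega_k(\xx^\star)$ for the initial gap, a standard guarantee for smooth strongly convex \algname{SGD} (with a constant-then-decreasing stepsize) yields, for the output $\hat\xx$, a bound of the form $\Eb{\Omega_k(\hat\xx)-\Omega_k(\xx^\star)}\le D_0\exp(-cT/\kappa)+\frac{C\sigma_\psi^2}{M_k T}$ for absolute constants $c,C>0$. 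Note that $\xx_k$ and $\mm_k$ are fixed for this subproblem, so $\xx^\star$ and $\norm{\xx^\star-\xx_k}$ are deterministic and the only randomness is the inner \algname{SGD} noise.

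First I would convert this to a gradient-norm bound via the smoothness inequality $\norm{\nabla\Omega_k(\hat\xx)}^2\le 2(L_\psi+M_k)\bigl(\Omega_k(\hat\xx)-\Omega_k(\xx^\star)\bigr)$, valid because $\nabla\Omega_k(\xx^\star)=0$. This splits the target~\eqref{eq:inexactness-criteria} into a \emph{bias} contribution $2(L_\psi+M_k)D_0\exp(-cT/\kappa)$ and a \emph{variance} contribution $2(L_\psi+M_k)\frac{C\sigma_\psi^2}{M_k T}$. The variance contribution is charged against the additive slack $S_k$: requiring $2(L_\psi+M_k)\frac{C\sigma_\psi^2}{M_k T}\le S_k$ gives exactly the second term $T=\cO\bigl(\frac{(L_\psi+M_k)\sigma_\psi^2}{M_k S_k}\bigr)$ of the claimed count. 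It is essential here to route the variance through the \emph{function-value} suboptimality rather than the squared distance $\norm{\hat\xx-\xx^\star}^2$: the latter would cost an extra factor of $\kappa$ (smoothness would contribute $(L_\psi+M_k)^2$ against a distance floor of order $\sigma_\psi^2/(M_k^2 T)$), producing $\kappa^2$ instead of the claimed $\kappa$.

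The bias contribution is instead charged against the \emph{relative} slack $\frac{M_k^2}{16}\Eb{\norm{\hat\xx-\xx_k}^2}$, and this is what produces the $\ln\kappa$ factor (rather than $\ln(1/S_k)$). Here I would lower-bound $\Eb{\norm{\hat\xx-\xx_k}^2}$ in terms of the warm-start distance: by the reverse triangle inequality $\norm{\hat\xx-\xx_k}^2\ge\frac12\norm{\xx^\star-\xx_k}^2-\norm{\hat\xx-\xx^\star}^2$, and once $\Eb{\norm{\hat\xx-\xx^\star}^2}$ (controlled by $\frac{2}{M_k}\Eb{\Omega_k(\hat\xx)-\Omega_k(\xx^\star)}$ via strong convexity) has dropped below $\frac14\norm{\xx^\star-\xx_k}^2$, the slack is at least a constant multiple of $M_k^2\norm{\xx^\star-\xx_k}^2$, which is at least a constant multiple of $M_k^2 D_0/(L_\psi+M_k)$ using $D_0\le\frac{L_\psi+M_k}{2}\norm{\xx_k-\xx^\star}^2$. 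Comparing with the bias contribution $2(L_\psi+M_k)D_0\exp(-cT/\kappa)$ then only requires $\exp(-cT/\kappa)$ to fall below a constant multiple of $M_k^2/(L_\psi+M_k)^2=\kappa^{-2}$, i.e.\ $T=\cO(\kappa\ln\kappa)$, the first claimed term. Summing the two requirements on $T$ gives the stated bound.

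The main obstacle is the bias/relative-slack bookkeeping in this last step: one must show the exponentially small bias is dominated by $\frac{M_k^2}{16}\norm{\hat\xx-\xx_k}^2$, which needs $\norm{\hat\xx-\xx_k}$ bounded below by a constant fraction of $\norm{\xx^\star-\xx_k}$ — delicate precisely when the proximal step barely moves, i.e.\ $\norm{\xx^\star-\xx_k}$ is tiny. In that degenerate regime $D_0$ is itself tiny and $\xx_k$ is already near-stationary for $\Omega_k$, so the same exponential-decay estimate drives the bias below $S_k$ \emph{directly} through the additive slack; reconciling these two regimes is the only genuinely careful part of the argument, the remainder being a routine invocation of the standard smooth strongly convex \algname{SGD} rate.
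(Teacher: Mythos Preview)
The paper gives no proof of this proposition; it simply states that the result is ``a simple modification of Proposition~2.6 in~\cite{woodworth2023losses}'' and refers the reader there. There is therefore no in-paper argument to compare your sketch against.

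That said, your sketch is sound and plausibly reconstructs the referenced argument. The key structural point---routing the variance contribution through the function-value bound $\norm{\nabla\Omega_k(\hat\xx)}^2\le 2(L_\psi+M)\bigl(\Omega_k(\hat\xx)-\Omega_k(\xx^\star)\bigr)$ rather than through the squared distance, so that the variance floor costs one factor of~$\kappa=(L_\psi+M)/M$ rather than~$\kappa^2$---is exactly right and is what pins the second term at $\cO\bigl(\kappa\sigma_\psi^2/S_k\bigr)$. The two-regime bookkeeping for the bias term also goes through once you fix the threshold explicitly: split at $\norm{\xx^\star-\xx_k}^2$ above or below a constant multiple of $S_k/M^2$. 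In the small regime one has $D_0=\cO(\kappa S_k/M)$, so the bias is $\cO\bigl(\kappa^2 S_k\exp(-cT/\kappa)\bigr)$ and $T=\cO(\kappa\ln\kappa)$ already drives it below~$S_k$ via the additive slack, as you say. In the large regime the variance floor $\cO\bigl(\sigma_\psi^2/(M^2 T)\bigr)$ in $\Eb{\norm{\hat\xx-\xx^\star}^2}$ drops below $\tfrac18\norm{\xx^\star-\xx_k}^2$ already for $T=\cO(\sigma_\psi^2/S_k)$, which is dominated by the variance budget $\cO(\kappa\sigma_\psi^2/S_k)$, so your reverse-triangle lower bound on $\norm{\hat\xx-\xx_k}^2$ is available and the bias-versus-relative-slack comparison yields $T=\cO(\kappa\ln\kappa)$. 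Your proposal is therefore correct; only this concrete threshold between the two regimes is missing from the write-up.
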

The proof of \Cref{prop:proxstep-sgd} is simple, and we refer interested readers to~\cite{woodworth2023losses} for the discussions therein.

\begin{figure*}[h]
    \centering
    \begin{tabular}{ccc}
        \includegraphics[width=0.3\textwidth]{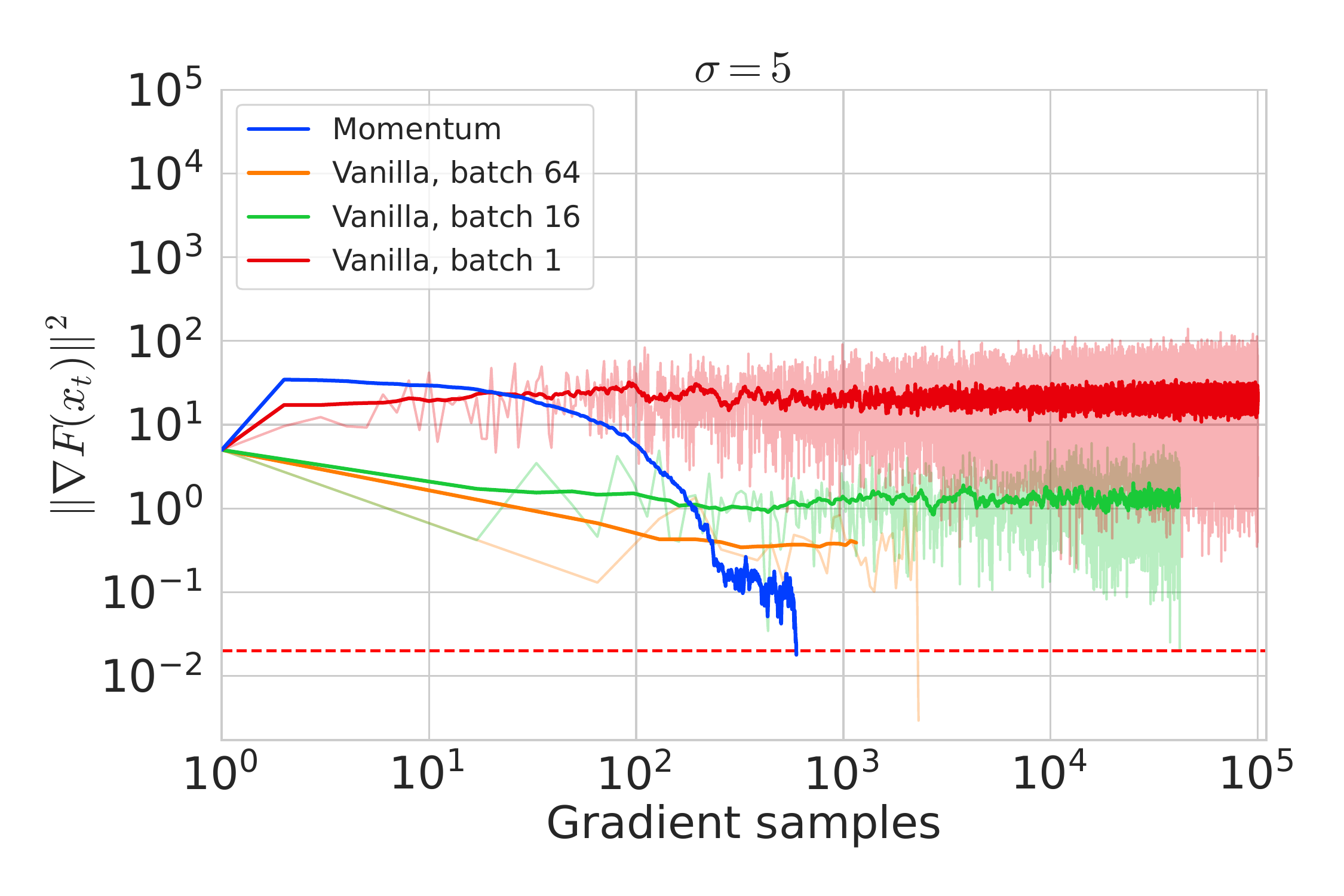}
        & \includegraphics[width=0.3\textwidth]{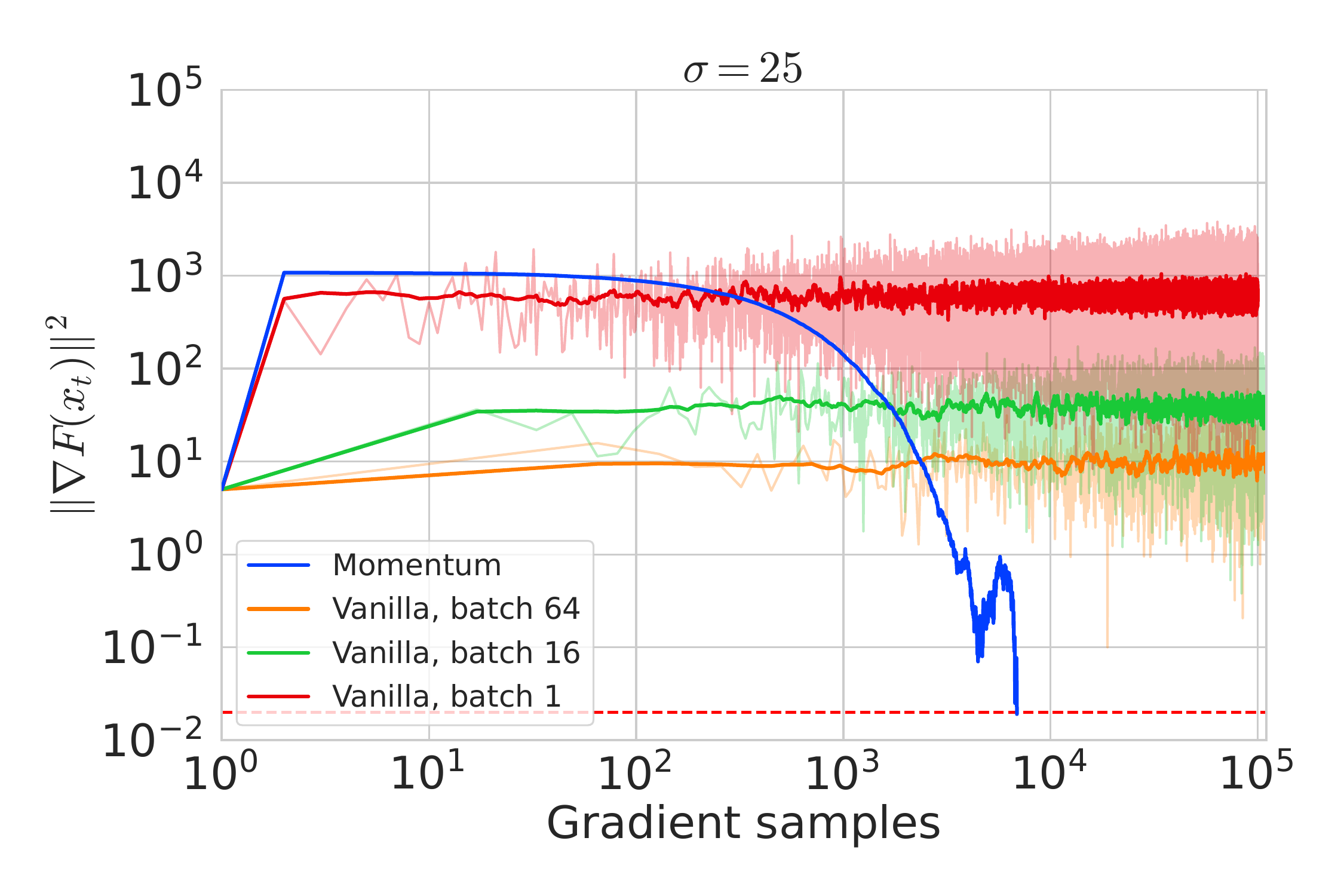}
        & \includegraphics[width=0.3\textwidth]{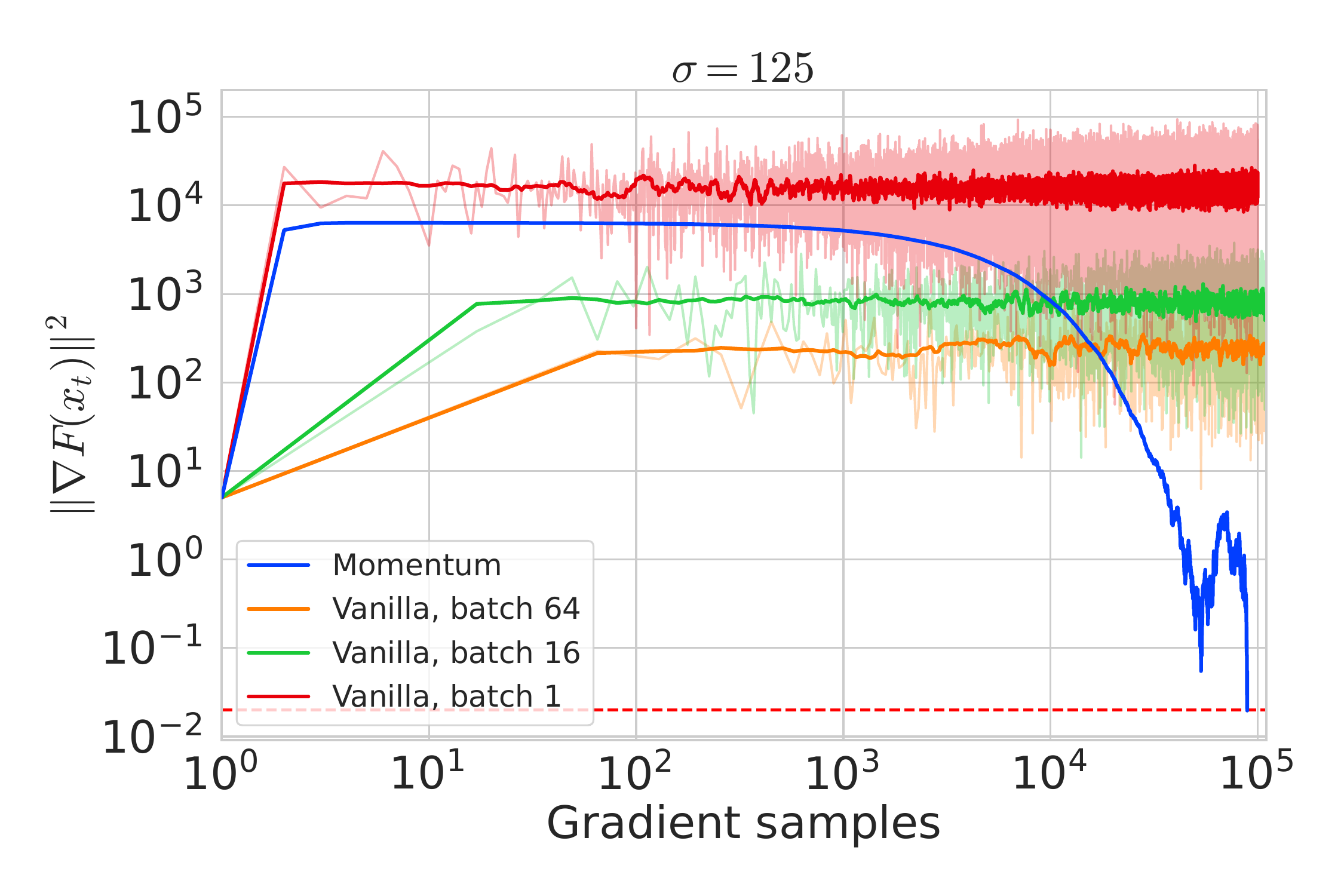}
    \end{tabular}
    \caption{Comparison of \Cref{alg:composite-momentum} and the vanilla stochastic proximal gradient method on the synthetic quadratic problem. For the vanilla stohastic proximal methods, we also highlight the smoothed curves on top of the original curves that oscillate much more. The left, middle, and right figures correspond to $\sigma=5,25,125$, respectively. The vanilla stochastic proximal gradient method uses batch sizes $1, 16, 64$. The x-axis represents the number of gradient samples and is truncated to only show the first $10^5$ gradient samples.}
    \label{fig:synthetic}
\end{figure*}

\section{Experiments}

In this section, we conduct numerical experiments to corroborate our theoretical findings and demonstrate the practical effectiveness of \Cref{alg:composite-momentum}. 

\subsection{Synthetic Quadratic Problem}

We first consider a synthetic quadratic problem inspired by our lower bound construction in \Cref{prop:lower-bound-sgd}. We consider the following problem in $\R^d$: we set $f(\xx)=\frac{L\norm{\xx}^2}{2}$ and $\psi(\xx) = \frac{a\norm{\xx}^2}{2}$, where $a$ is sufficiently large. Note that the vanilla stochastic proximal gradient method and \Cref{alg:composite-momentum} are oblivious to the parameter $a$. We add Gaussian noise to the gradients to control the stochastic level $\sigma^2$ of the stochastic oracle. We simulate the batch sample by dividing the variance by the batch size.

In \Cref{fig:synthetic}, we compare the performance of \Cref{alg:composite-momentum} and the vanilla stochastic proximal gradient method. We set $d=5, L=1$ and $a=10^4$. We run the vanilla method with batch sizes $1, 16, 64$. We set $\sigma=5,25,125$ respectively. The parameter $ M $ is tuned by a grid search in $\{10^0, 10^1, 10^2, 10^3, 10^4\}$ for all methods, and the momentum parameter $\gamma$ is tuned by a grid search in $\{10^{-1}, 10^{-2}, 10^{-3}, 10^{-4}, 10^{-5}\}$. 
We set the maximum number of iterations to be $10^4$, and the tolerance is $0.02$. We see that as $\sigma^2$ increases, \Cref{alg:composite-momentum} still reaches the desired tolerance, while the vanilla method with batch size $1$ fails to converge in all cases, and with batch sizes $16$ and $64$ only converges when $\sigma=5$. In particular, with batch size $1$, the error of vanilla method oscillates around $22$ with $\sigma^2=5^2$, around $672$ with $\sigma^2=25^2$, and around $14381$ with $\sigma^2=125^2$. In other words, the error of the vanilla method is indeed proportional to $\sigma^2$, as predicted by our lower bound result in \Cref{prop:lower-bound-sgd}. We also have that for \Cref{alg:composite-momentum}, $M$ is set to be $10,10,100$ and $\gamma$ is set to be $0.01,0.001,0.0001$ for $\sigma=5,25,125$ respectively, which is consistent with our theoretical prediction that $M$ should increase while $\gamma$ should decrease as $\sigma$ increases.
We also point out that the momentum method exhibits a jump in the error in the first several iterations, consistent with our analysis that the first step of the momentum method incurs an $\cO(\sigma^2)$ error as well.

\subsection{Regularized Machine Learning Experiment}

Now we consider the classical application of composite optimization: regularized machine learning~\cite{Liu_2015_CVPR}. We use the $\ell_{\infty,1}$ regularizer to regularize the weights on each layer. The proximal step with the $\ell_{\infty,1}$ regularizer is implemented in~\cite{murray2019autosizing}. We evaluate the performances of \Cref{alg:composite-momentum} and the vanilla stochastic proximal gradient method on the Cifar-10 dataset~\cite{krizhevsky2014cifar10} with the Resnet-18~\cite{he2016deep}. The regularization parameter is set to be $0.1$, which is observed in~\cite{murray2019autosizing} to achive a balance between enforcing sparsity in the model and maintaining the model performance. We use a batch size of $256$ and run $300$ epochs. We use the standard step size parameter $M=10$ (corresponding to a learning rate of $0.1$) for the experiment. We apply a multi-step learning rate scheduler at $150$ epoch and $250$ epoch, with a decay factor of $0.1$. For \Cref{alg:composite-momentum}, momentum parameter $\gamma $ is set to be $0.1$ by a grid search.

\begin{figure}[H]
    \centering
    \includegraphics[width=0.49\linewidth]{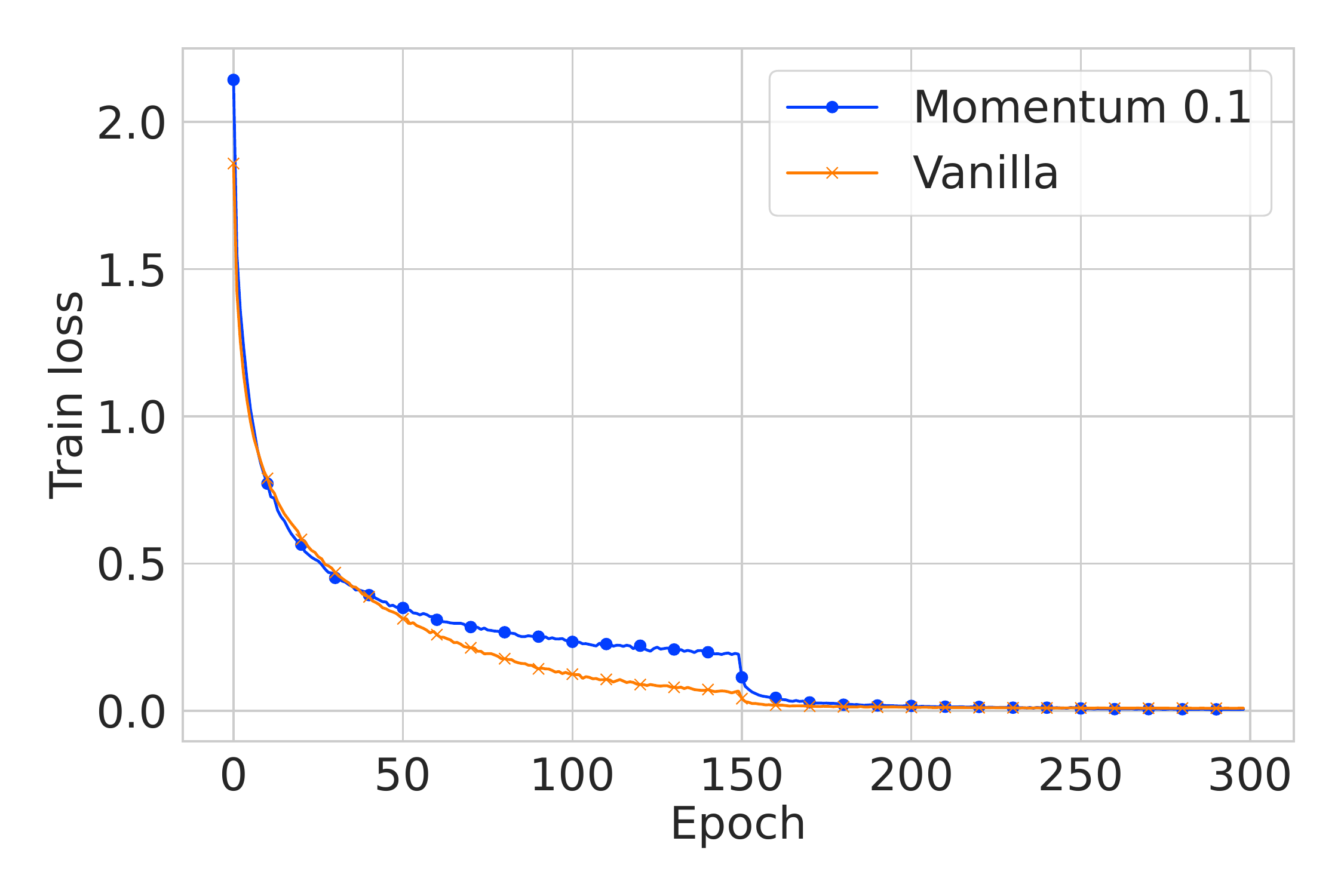}
    \hfill \includegraphics[width=0.49\linewidth]{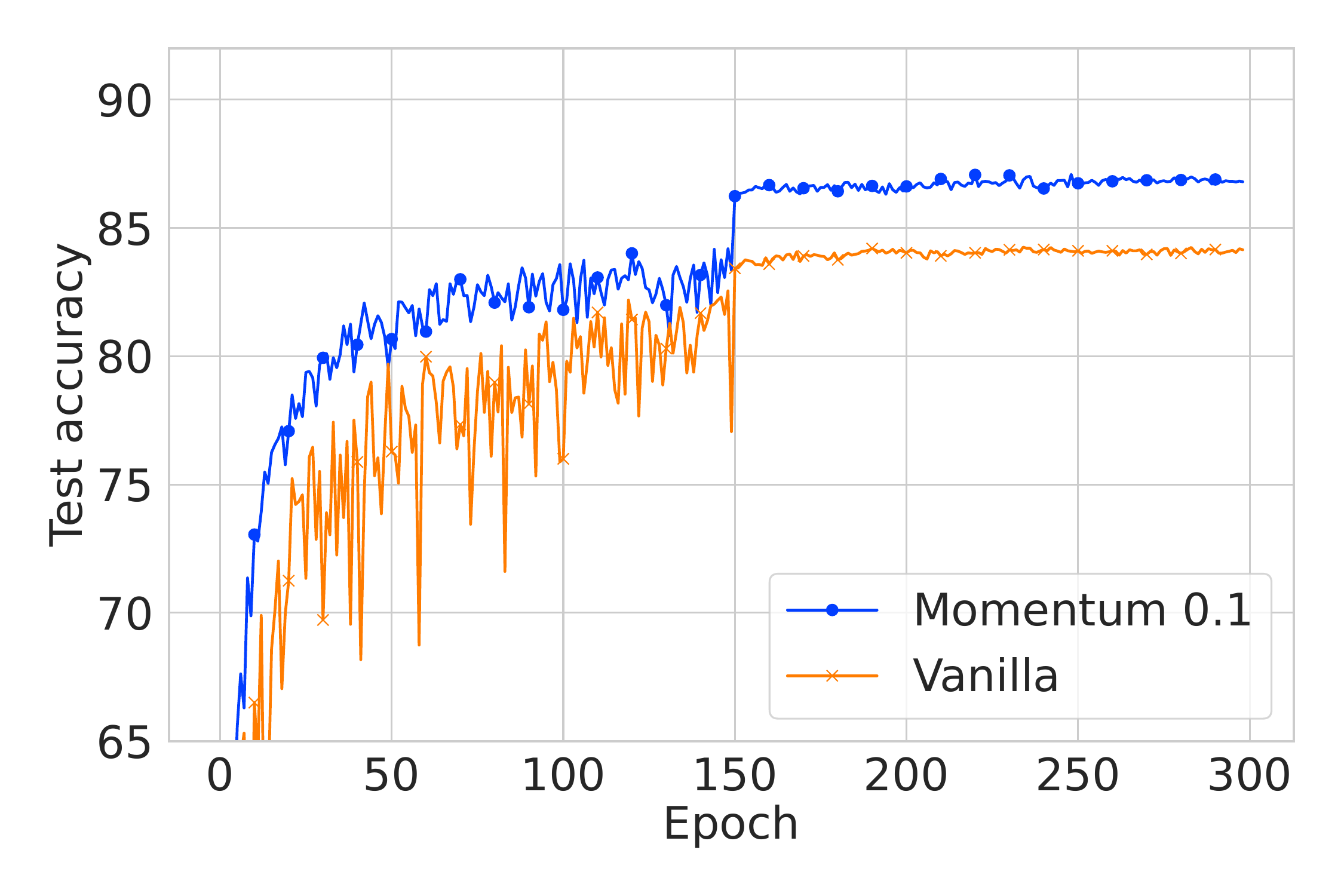}
    \label{fig:regcifar10}
    \caption{Comparison of \Cref{alg:composite-momentum} and the vanilla stochastic proximal gradient method for the $\ell_{\infty, 1}$ regularized machine learning problem on Cifar-10 dataset, with Resnet-18. The left and right figures correspond to the training loss and test accuracy, respectively. }
    \vspace{-2mm}
\end{figure}

We summarize the performances of the two methods in \Cref{fig:regcifar10}. We see that \Cref{alg:composite-momentum} outperforms the vanilla method in terms of both training loss and test accuracy. In terms of the test accuracy, \Cref{alg:composite-momentum} displays a much smoother curve than that of the vanilla method.

In connection to our theoretical observation that the step size parameter $M$ should increase while the momentum parameter $\gamma$ should decrease as the stochastic level $\sigma$ increases, we observe that, grid searches with respect to the train loss lead to the choices $M=100, 100, 10$ and $\gamma = 0.1, 0.1, 0.1$ for batch sizes $64, 128$ and $256$. It appears that the momentum parameter $\gamma$ is less sensitive to the batch size than the step size parameter $M$, and setting $\gamma=0.1$ might be a good choice in practice. We note that while there is certainly a correlation between the batch size and the stochastic level $\sigma^2$, we do not have a direct control over $\sigma^2$, as compared to the synthetic problem.

\section{Conclusion}
In this work we revisit the non-convex stochastic composite optimization problem, and address its convergence issue in the small batch regime. We show that the vanilla stochastic proximal method cannot converge to the stationary point beyond the variance of the gradient noise. We analyze the immensely successful Polyak momentum method in this context and establish its optimal convergence rate without any batch size requirement, demonstrating its superiority over the vanilla method. We conduct numerical experiments to corroborate our theoretical findings. 
In light of the past successes of proximal methods in ML, and the recent emerging application scenarios for proximal methods in DL our findings reinforce the robustness and the potential of the Polyak momentum method. %

\section*{Acknowledgments}
We acknowledge partial funding from Helmholtz AI (project Opt4Bio).

\bibliography{reference}
\bibliographystyle{plainnat}

\newpage

\appendix
\onecolumn

\section{Additional Experiments}
\label{sec:additional-experiments}

In this section we consider the recent statistical preconditioner (proxy training) technique of~\cite{hendrikx2020statistically, woodworth2023losses}, where for some objective $\ell$, we consider $f=\ell-\hat \ell$ and $\psi=\hat \ell$. $\hat \ell$ is a ``statistical preconditioner'' defined on a sub-sample of the training dataset. Here we simulate the setup on Cifar-10 dataset~\cite{krizhevsky2014cifar10}. $\ell$ is defined on the whole $50000$ training images, and $\hat \ell$ is defined on a subset of $2560$ training images. We follow the implementation of~\cite{woodworth2023losses}, with one difference: at each iteration $k$, \cite{woodworth2023losses} computes the full gradient $\nabla \hat \ell(\xx_k)$ while we only compute a stochastic gradient of batch size $128$. In the experiment, we use batch size of $512$ for $\ell$, and a batch size of $128$ for the \algname{SGD} updates on $\Omega_k$. We perform a grid-search on the parameters $M$ and $\gamma$. The \algname{SGD} on $\Omega_k$ takes $20$ iterations and a step-size $0.01$, which is tuned in~\cite{woodworth2023losses}. The experiments demonstrate that \algname{SGD} is effective and reliable for solving the proximal step with sufficient accuracy (see also the experiments in~\cite{woodworth2023losses}). We see that the momentum method outperforms the vanilla method, and the convergence of the momentum method seems smoother than the vanilla method.

\begin{figure}[tb]
    \centering
    \begin{tabular}{cc}
        \includegraphics[width=0.3\linewidth]{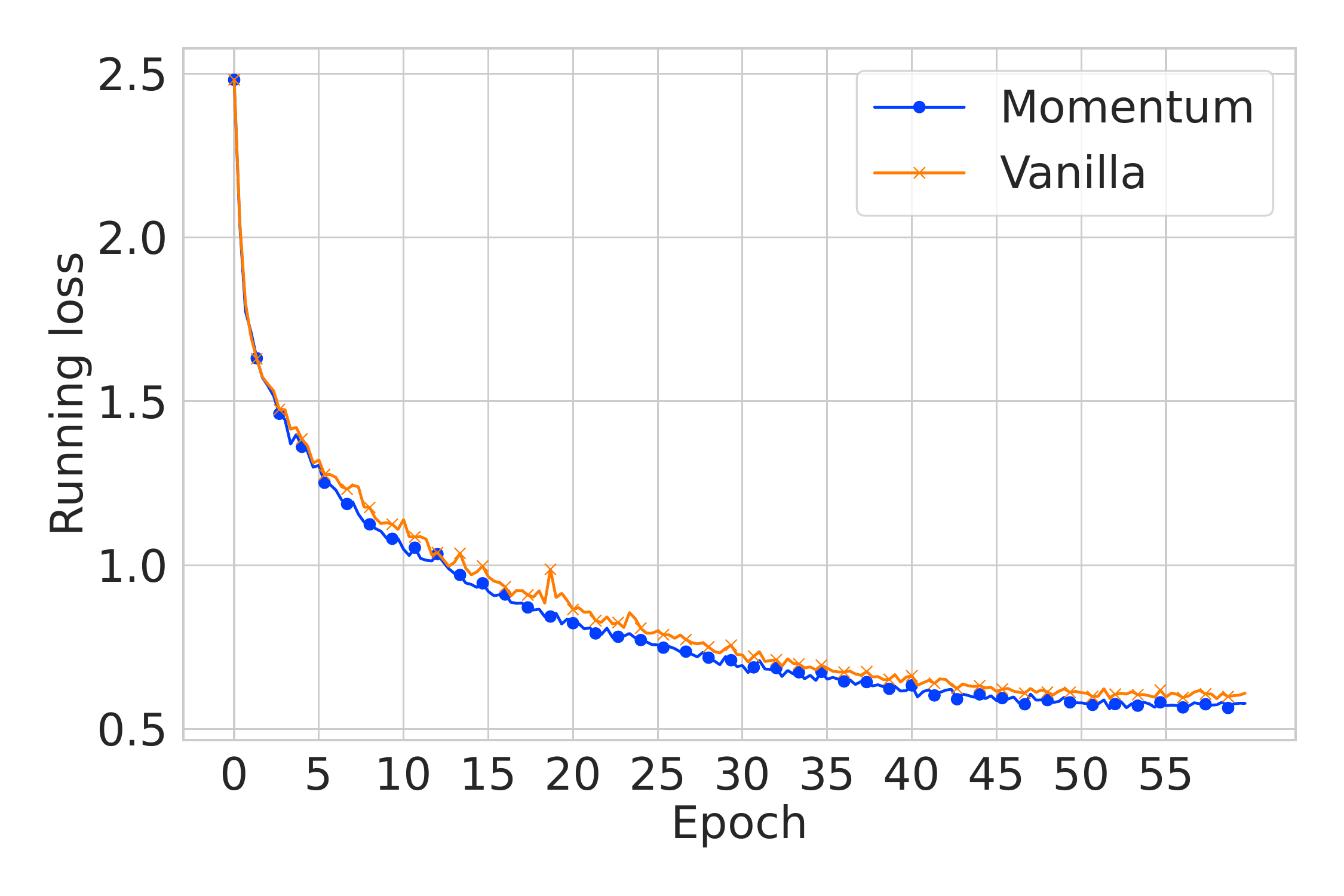} &
        \includegraphics[width=0.3\linewidth]{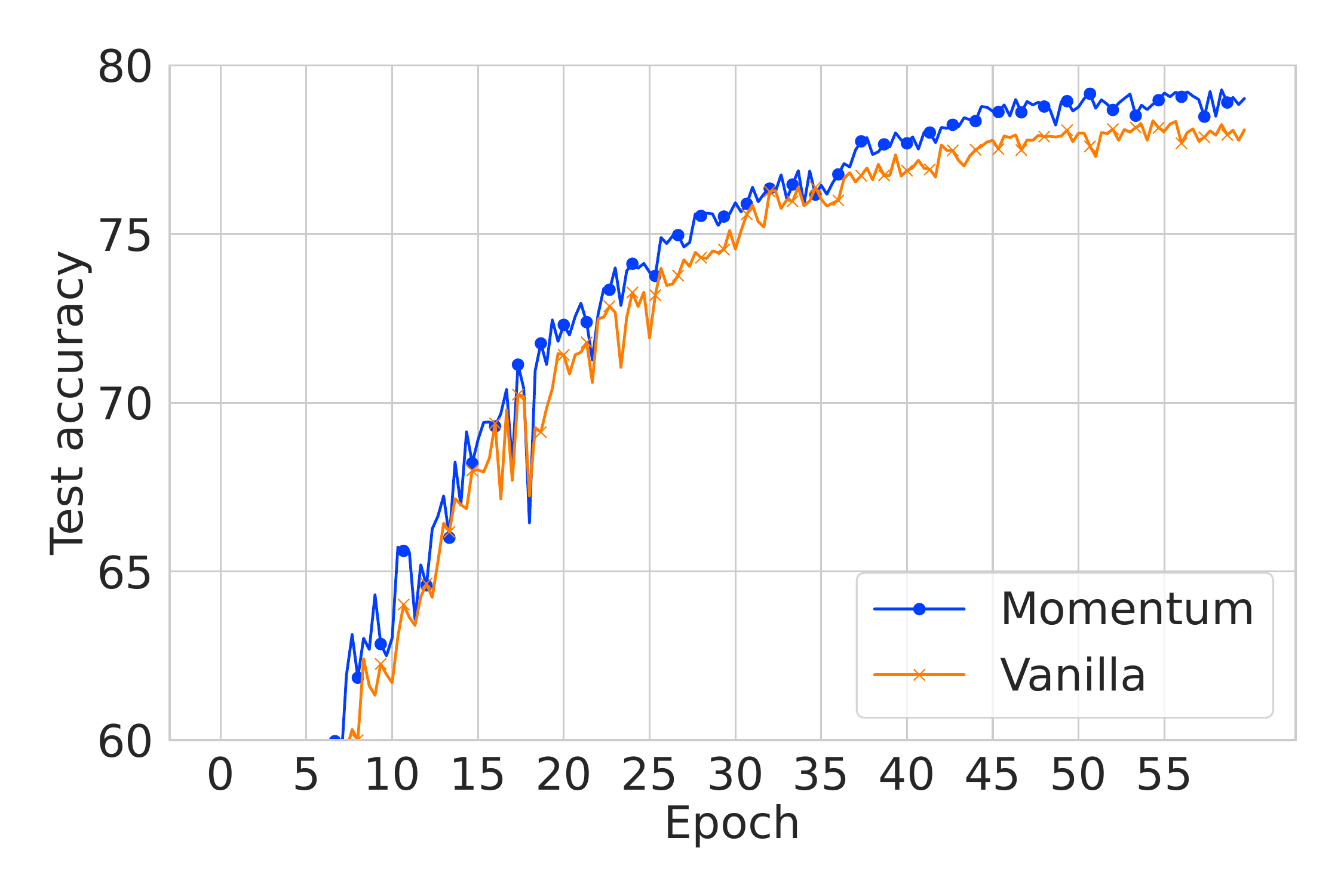}
    \end{tabular}
    
    \label{fig:proxyprox}
    \caption{Comparison of \Cref{alg:composite-momentum} and the vanilla stochastic proximal gradient method for the statistical preconditioning technique on Cifar-10 dataset. The left and right figures correspond to the training loss and test accuracy, respectively. }
    \vspace{-2mm}
\end{figure}

\section{Missing Proofs in Section~\ref{sec:main-results}}
\label{sec:proofs-main-results}
We start by giving the proof of \Cref{lem:descent-Delta}:
\descentDelta*
\begin{proof}
    Indeed,
    \begin{align*}
        \Delta_{k + 1}
        &=
        \Eb{\norm{\mm_{k+1}-\nabla  f(\xx_{k+1} )}^2}
        =
        \Eb{\norm{(1-\gamma_k)(\mm_{k}-\nabla  f(\xx_{k+1}))+\gamma_k(\gg_{k+1} -\nabla  f(\xx_{k+1} ))}^2 }\\
            &= (1-\gamma_k)^2\Eb{\norm{\mm_{k}-\nabla  f(\xx_{k+1} )}^2}+\Eb{\gamma_k^2\norm{\gg_{k+1} -\nabla f(\xx_{k+1} )}^2}\\
            &\leq (1-\gamma_k)^2\Eb{\norm{\mm_{k} -\nabla f(\xx_{k})+\nabla  f(\xx_{k})-\nabla f(\xx_{k+1} )}^2}+\gamma_k^2\sigma^2.
    \end{align*}
    For the second identity, we have used the fact that $\gg_{k+1}$ is unbiased. By Young's inequality, for any $\alpha>0$, we have:
    \begin{align*}
        \Delta_{k + 1}
        &\leq
        (1-\gamma_k)^2(1+\alpha) \Delta_k
        +
        (1-\gamma_k)^2(1+\alpha^{-1})\Eb{\norm{\nabla  f(\xx_{k})-\nabla f(\xx_{k+1})}^2}+\gamma_k^2\sigma^2\\
        &\leq
        (1-\gamma_k)^2(1+\alpha) \Delta_k
        +
        (1-\gamma_k)^2(1+\alpha^{-1})L^2 R_k + \gamma_k^2\sigma^2,
    \end{align*}
    where the last inequality follows from the smoothness of $ f$.
    Choosing now $\alpha\eqdef \frac{\gamma_k}{1 - \gamma_k}$, we get the claimed inequality.
\end{proof}

Now we give the proof of \Cref{lem:lyapunov}:

\descentPhi*
\begin{proof}
    We put together \Cref{lem:descent-Delta,lem:descent-F} and get:
    \begin{align*}
        \Phi_{k+1}
        &=
        F_{k+1} + a\Delta_{k+1}
        \leq
        F_k - \frac{ M_k -L}{4}R_k+\frac{\Delta_k }{ M_k -L}
        + a\Bigl[(1-\gamma_k)\Delta_k +\frac{L^2}{\gamma_k}R_k+\gamma_k^2\sigma^2\Bigr]
        \\
        &=
        F_k - H_k R_k + a\gamma_k^2\sigma^2 + \left(1-\gamma_k+\frac{1}{a( M_k -L)} \right)a\Delta_k,
    \end{align*}
    where $H_k \eqdef \frac{ M_k -L}{4}-\frac{aL^2}{\gamma_k}$.
    If $H_k>0$, then we can plug \Cref{lem:gradf-bound} in and get:
    \begin{align*}
        \Phi_{k+1}
        &\leq
        F_k
        -
        \frac{H_k}{M_k^2+L^2} \left( \frac{1}{3} \Eb{\norm{\nabla F(\xx_{k+1})}^2} - \Delta_k \right)
        + \left(1-\gamma_k+\frac{1}{a( M_k -L)} \right)a\Delta_k  +a\gamma_k^2\sigma^2\\
        &=
        F_k
        -
        \frac{H_k}{3 (M_k^2+L^2)} \Eb{\norm{\nabla F(\xx_{k+1})}^2}
        +
        \left( 1-\gamma_k + \frac{1}{a( M_k -L)} + \frac{H_k}{a( M_k ^2+L^2)} \right) a\Delta_k
        +
        a\gamma_k^2\sigma^2.
    \end{align*}
    Now let us choose $a=\frac{\gamma_k( M_k -L)}{8L^2}$, so that $H_k = \frac{M_k - L}{8} > 0$.
    Then,
    \[
        \frac{1}{a( M_k -L)}+\frac{H_k}{a( M_k ^2+L^2)}
        =\frac{8L^2}{\gamma_k( M_k -L)^2}+\frac{L^2}{\gamma_k( M_k ^2+L^2)}
        = \frac{L^2}{\gamma_k}\left(\frac{8}{( M_k -L)^2}+\frac{1}{ M_k ^2+L^2}\right).
    \]
    Therefore we need $\frac{L^2}{\gamma_k}\bigl(\frac{8}{( M_k -L)^2}+\frac{1}{ M_k ^2+L^2}\bigr)\leq \gamma_k$
    or, equivalently,
    $\gamma_k^2\geq L^2\bigl(\frac{8}{( M_k -L)^2}+\frac{1}{ M_k ^2+L^2}\bigr)$.
    Since $( M_k -L)^2\leq  M_k ^2+L^2$, it suffices to set $\gamma_k^2=\frac{9L^2}{( M_k -L)^2}$, i.e.,
    $\gamma_k = \frac{3L}{M_k -L}$.
    Note that this requires $ M_k >4L$ as we want to keep $\gamma_k < 1$.
    For our value of $\gamma_k$, we get $a = \frac{3}{8L}$.
    Putting everything together, we obtain
    \[
        \Phi_{k+1} \leq \Phi_k -\frac{ M_k -L}{24( M_k ^2+L^2)}\Eb{\norm{\nabla F(\xx_{k+1})}^2} + \frac{27L \sigma^2}{8( M_k -L)^2}.
    \]
    Since $\frac{ M_k -L}{M_k ^2+L^2}\geq \frac{1}{2(M_k-L)}\geq \frac{1}{2M_k}$, we can further estimate
    \[
        \Phi_{k+1} \leq \Phi_k - \frac{1}{48M_k}\Eb{\norm{\nabla F(\xx_{k+1})}^2} + \frac{27L \sigma^2}{4M_k^2}.
        \qedhere
    \]
\end{proof}

\convergenceK*

\begin{proof}
    Setting $M_k\eqdef M$ some constant, rearranging and summing \Cref{eq:lyapunov} from $0$ to $K-1$, we get
\[
    \frac{1}{48 M }\sum_{k=0}^{K-1}\Eb{\norm{\nabla F(\xx_{k+1})}^2} \leq \Phi_0 + \frac{27KL\sigma^2}{4 M ^2}
\]
Therefore 
\[
    \frac{1}{K}\sum_{k=0}^{K-1}\Eb{\norm{\nabla F(\xx_{k+1})}^2}\leq \frac{48 M \Phi_0 }{K}+ \frac{324L\sigma^2}{ M }
\]    

Recall that $ M >4L$, then for $ M  = 4L + \frac{3^{\nicefrac{3}{2}}}{2}\sqrt{\frac{KL\sigma^2}{\Phi_0}}$. We have:
\[
    \frac{1}{K}\sum_{k=0}^{K-1}\Eb{\norm{\nabla F(\xx_{k+1})}^2}\leq \frac{192L\Phi_0}{K} + 48(3^{\nicefrac{3}{2}})\sqrt{\frac{L\Phi_0\sigma^2}{K}}
\]

Therefore, after at most $\cO(\frac{L\Phi_0\sigma^2}{\epsilon^2}+\frac{L\Phi_0}{\epsilon})$ iterations, we have $\frac{1}{K}\sum_{k=0}^{K-1}\Eb{\norm{\nabla F(\xx_{k+1})}^2}\leq \epsilon$.

If $t$ is chosen uniformly at random from $\{1,\ldots, K\}$, then $\Eb{\norm{\nabla F(\xx_t)}^2}=\frac{1}{K}\sum_{k=0}^{K-1}\Eb{\norm{\nabla F(\xx_{k+1})}^2}$.
\end{proof}

We also discuss the case where $M_k$ is not a constant:
\convergenceKIncreasingMk*

\begin{proof}
    Denote $G_{k + 1}^2 \eqdef \Eb{\norm{\nabla F(\xx_{k+1})}^2}$.
    According to \Cref{lem:lyapunov}, we have
    \[
        \Phi_{k+1} \leq \Phi_k  - \frac{1}{48 M_k } G_{k + 1}^2 + \frac{27L \sigma^2}{4 M_k ^2}.
    \]
Rearranging and summing the above from $0$ to $K-1$, where $K \geq 1$ is arbitrary, we get:
\[
    \frac{1}{48} \sum_{k=0}^{K-1}\frac{1}{M_k} G_{k + 1}^2 \leq \Phi_0  + \frac{27L\sigma^2}{4}\sum_{k=0}^{K-1}\frac{1}{M_k^2}.
\]
Denoting $A_i = \sum_{k=0}^{i - 1}\frac{1}{M_k}$, we obtain
\[
    \frac{1}{ A_K} \sum_{k=0}^{K-1}\frac{1}{M_k} G_{k + 1}^2 \leq \frac{48\Phi_0  + 324L\sigma^2\sum_{k=0}^{K-1}\frac{1}{M_k^2}}{A_K},
\]
Hence, for $M_k\eqdef \max\bigl\{\sqrt{\frac{(k+1)L\sigma^2}{\phi_0}},4L\bigr\}$, we have
\[
    A_K
    = \sum_{k=1}^{K} \min\biggl\{\sqrt{\frac{\Phi_0}{kL\sigma^2}},\frac{1}{4L}\biggr\}
    \geq K\min\biggl\{\sqrt{\frac{\Phi_0}{KL\sigma^2}},\frac{1}{4L}\biggr\}
    = \min\biggl\{\sqrt{\frac{K\Phi_0}{L\sigma^2}},\frac{K}{4L}\biggr\}
\]
and
\[
    48\Phi_0+324L\sigma^2\sum_{k=0}^{K-1} \frac{1}{M_k^2}
    \leq
    48\Phi_0+324L\sigma^2\sum_{k=1}^{K} \frac{\Phi_0}{kL\sigma^2}
    \leq 372\Phi_0\sum_{k=1}^{K}\frac{1}{k}.
\]
Putting everything together, we get:
\[
    \frac{1}{ A_K} \sum_{k=0}^{K-1}\frac{1}{M_k} G_{k + 1}^2
    \leq \frac{372\Phi_0\sum_{k=1}^{K-1}\frac{1}{k}}{\min\{\sqrt{\frac{K\Phi_0}{L\sigma^2}},\frac{K}{4L}\}}
    \leq \frac{372\Phi_0 (\ln K + 1)}{\min\{\sqrt{\frac{K\Phi_0}{L\sigma^2}},\frac{K}{4L}\}}
    \leq 372 (\ln K+1) \Bigl( \sqrt{\frac{L\Phi_0\sigma^2}{K}} + \frac{L\Phi_0}{K} \Bigr).
\]
If $t(k)$ is chosen from $\{1,\ldots,k\}$ with probabilities $\Pr(t(k)=i)=\frac{1}{M_i A_k}$ , then $\Eb{\nabla F(\xx_{t(k)})} = \frac{1}{A_k}\sum_{i=0}^{k-1}\frac{G_{i+1}^2}{M_i}$.
\end{proof}

\section{Missing Proofs in Section~\ref{sec:vr-effect}}
\label{sec:proofs-vr-effect}
\begin{lemma}
    Under \cref{assumption:psi,assumption:smoothness,assumption:noise,assumption:exact-proximal-step}, for $\{\xx_k\}_{k>0}$ generated by \Cref{alg:composite-momentum}, if $\gamma_k \eqdef \frac{3\sqrt{2}L}{M_k -L}$, $a \eqdef \frac{\sqrt{2}}{8L}$, and $ M_k >(1+3\sqrt{2})L$, we have:
    \[
        \Phi_{k+1} \leq\Phi_k  - \frac{1}{48 M_k }\Eb{\norm{\nabla F(\xx_{k+1})}^2} - \frac{3\sqrt{2}}{2 M_k }\Delta_k + \frac{27\sqrt{2}L \sigma^2}{4 M_k ^2}.
    \]
\end{lemma}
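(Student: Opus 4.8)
The plan is to follow the same template as the proof of \Cref{lem:lyapunov}, but to tune the free constants so that the coefficient of $\Delta_k$ in the telescoped recursion ends up strictly negative rather than being driven to zero. First I would combine \Cref{lem:descent-F} with $a$ times \Cref{lem:descent-Delta} to obtain
\[
    \Phi_{k+1} \leq F_k - H_k R_k + a\gamma_k^2\sigma^2 + \Bigl(1 - \gamma_k + \tfrac{1}{a(M_k-L)}\Bigr) a\Delta_k, \qquad H_k \eqdef \tfrac{M_k-L}{4} - \tfrac{aL^2}{\gamma_k},
\]
exactly as in the un-refined lemma. Assuming $H_k > 0$, I would then invoke \Cref{lem:gradf-bound} to replace $-H_k R_k$ by a negative multiple of $\Eb{\norm{\nabla F(\xx_{k+1})}^2}$ plus a $\tfrac{H_k}{M_k^2+L^2}\Delta_k$ correction, and finally substitute $F_k = \Phi_k - a\Delta_k$ to collect everything in terms of $\Phi_k$, the gradient norm, $\Delta_k$, and $\sigma^2$.

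At this point the coefficient of $\Delta_k$ is $C \eqdef \tfrac{1}{M_k-L} - a\gamma_k + \tfrac{H_k}{M_k^2+L^2}$, the coefficient of the gradient norm is $\tfrac{H_k}{3(M_k^2+L^2)}$, and the coefficient of $\sigma^2$ is $a\gamma_k^2$. Whereas the proof of \Cref{lem:lyapunov} picks $a$ and $\gamma_k$ precisely so that $C = 0$, here the goal is instead to leave $C \leq -\tfrac{3\sqrt2}{2M_k}$. I would therefore increase the momentum parameter to $\gamma_k = \tfrac{3\sqrt2 L}{M_k-L}$ and retune $a = \tfrac{\sqrt2}{8L}$, the aim being that the decay contribution $a\gamma_k$ produced by \Cref{lem:descent-Delta} dominates the positive $\tfrac{1}{M_k-L}$ and $\tfrac{H_k}{M_k^2+L^2}$ terms, so that a genuinely negative residual on $\Delta_k$ survives the telescoping.

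The remaining work is essentially arithmetic bookkeeping: verify $H_k > 0$ and that $M_k > (1+3\sqrt2)L$ keeps $\gamma_k < 1$; then convert the $\tfrac{M_k-L}{M_k^2+L^2}$- and $\tfrac{1}{(M_k-L)^2}$-type expressions into clean $\tfrac{1}{M_k}$- and $\tfrac{1}{M_k^2}$-type bounds using the elementary inequalities $(M_k-L)^2 \leq M_k^2 + L^2 \leq 2M_k(M_k-L)$, valid in the stated range of $M_k$. Matching the three coefficients to $\tfrac{1}{48M_k}$, $-\tfrac{3\sqrt2}{2M_k}$, and $\tfrac{27\sqrt2 L}{4M_k^2}$ then yields the claim.

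The main obstacle is the simultaneous three-way balancing. The same quantity $H_k$ controls both the gradient-norm coefficient, which must stay as large as $\tfrac{1}{48M_k}$, and, through the $\tfrac{H_k}{M_k^2+L^2}\Delta_k$ correction, works \emph{against} the negativity of $C$; at the same time $a\gamma_k$ must be pushed large enough to drive $C$ below $-\tfrac{3\sqrt2}{2M_k}$ without inflating the noise term $a\gamma_k^2$ beyond $\tfrac{27\sqrt2 L}{4M_k^2}$. Getting all three constants to land simultaneously, with only $a$, $\gamma_k$, and implicitly the Young split inside \Cref{lem:descent-F} as degrees of freedom, is the delicate part of the argument, and it is precisely what dictates the specific numerical values of $a$, $\gamma_k$, and the threshold $(1+3\sqrt2)L$ on $M_k$.
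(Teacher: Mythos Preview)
Your proposal is correct and takes essentially the same approach as the paper. The paper's proof also follows the template of \Cref{lem:lyapunov}, but rather than phrasing it as driving the $\Delta_k$-coefficient $C$ below a negative threshold, it simply tightens the requirement $\frac{L^2}{\gamma_k}\bigl(\frac{8}{(M_k-L)^2}+\frac{1}{M_k^2+L^2}\bigr)\leq\gamma_k$ from the original proof to $\leq\frac{\gamma_k}{2}$, leaving a residual $-\frac{\gamma_k}{2}$ in the coefficient of $a\Delta_k$; the choice $\gamma_k=\frac{3\sqrt{2}L}{M_k-L}$ and the subsequent elementary simplifications are exactly the bookkeeping you describe.
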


\begin{proof}
    We repeat the proof of \Cref{lem:lyapunov} but now require that
    $\frac{L^2}{\gamma_k}\bigl(\frac{8}{( M_k -L)^2}+\frac{1}{ M_k ^2+L^2}\bigr)\leq \frac{\gamma_k}{2}$.
    To satisfy this inequality, it suffices to choose $\gamma_k = \frac{3\sqrt{2}L}{ M_k -L}$,
    which requires that $ M_k > (1+3\sqrt{2})L$.
    Now we have $a= \frac{\sqrt{2}}{8L}$ and:
    \[
        \Phi_{k+1} \leq \Phi_k -\frac{ M_k -L}{24( M_k ^2+L^2)}\Eb{\norm{\nabla F(\xx_{k+1})}^2} - \frac{3\sqrt{2}L}{2( M_k -L)}\Delta_k  + \frac{27\sqrt{2}L \sigma^2}{4( M_k -L)^2}.
    \]
    Let $ M_k =\tau_k L$ for some $\tau_k>(1+3\sqrt{2})$.
    Note that, for $\tau_k>(1+3\sqrt{2})$, we have $\frac{\tau_k - 1}{\tau_k^2+1}\geq\frac{1}{2\tau_k}$, $\frac{1}{2(\tau_k-1)}\geq \frac{1}{2\tau_k}$ and $\frac{1}{(\tau_k-1)^2}\leq \frac{2}{\tau_k^2}$. Therefore, for $ M_k >(1+3\sqrt{2})L$:
    \[
        \Phi_{k+1} \leq \Phi_k  - \frac{1}{48 M_k }\Eb{\norm{\nabla F(\xx_{k+1})}^2} - \frac{3\sqrt{2}}{2 M_k }\Delta_k + \frac{27\sqrt{2}L \sigma^2}{4 M_k ^2}.
        \qedhere
    \]
\end{proof}

\section{Missing Proofs in Section~\ref{sec:inexact-proximal-step}}
\label{sec:proofs-inexact}
First, we notice that, under \cref{assumption:psi,assumption:smoothness}, \cref{lem:descent-F} still holds in the inexact case. Now we give an analogous result to \Cref{lem:gradf-bound} in the inexact case:

\begin{lemma}
    \label{lem:gradf-bound-approx}
    Under \Cref{assumption:smoothness}, for $\{\xx_k\}_{k>0}$ generated by \Cref{alg:composite-momentum}, we have:
    \[
        (M_k^2+L^2) R_k
        \geq
        \frac{1}{4} \Eb{\norm{\nabla F(\xx_{k+1})}^2} -\Delta_k - \Eb{\norm{\nabla \Omega(\xx_{k+1})}^2}.
    \]
\end{lemma}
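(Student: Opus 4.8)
The plan is to replay the proof of \Cref{lem:gradf-bound} almost verbatim, tracking the single place where exactness of the proximal step was used. In the exact case, the optimality condition \eqref{eq:proxstep-optcond} let us replace $\mm_k + \nabla\psi(\xx_{k+1})$ by $-M_k(\xx_{k+1}-\xx_k)$; in the inexact case this quantity is instead $\nabla\Omega_k(\xx_{k+1}) - M_k(\xx_{k+1}-\xx_k)$, since the gradient of $\Omega_k$ at $\xx_{k+1}$ equals $\mm_k + \nabla\psi(\xx_{k+1}) + M_k(\xx_{k+1}-\xx_k)$ and no longer vanishes. First I would write the same decomposition
\[
\nabla F(\xx_{k+1}) = \bigl(\mm_k + \nabla\psi(\xx_{k+1})\bigr) + \bigl(\nabla f(\xx_k) - \mm_k\bigr) + \bigl(\nabla f(\xx_{k+1}) - \nabla f(\xx_k)\bigr),
\]
and then substitute $\mm_k + \nabla\psi(\xx_{k+1}) = \nabla\Omega_k(\xx_{k+1}) - M_k(\xx_{k+1}-\xx_k)$, so that $\nabla F(\xx_{k+1})$ becomes a sum of \emph{four} vectors rather than three.

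Next I would apply the generalized triangle inequality $\norm{\sum_{i=1}^4 \vv_i}^2 \le 4\sum_{i=1}^4 \norm{\vv_i}^2$ to these four terms; this is exactly what turns the constant $\frac13$ in \Cref{lem:gradf-bound} into the weaker $\frac14$ here, and it produces the extra summand $4\norm{\nabla\Omega_k(\xx_{k+1})}^2$. Bounding the last term $\norm{\nabla f(\xx_{k+1}) - \nabla f(\xx_k)}^2 \le L^2\norm{\xx_{k+1}-\xx_k}^2$ via \Cref{assumption:smoothness} and grouping it with the term $M_k^2\norm{\xx_{k+1}-\xx_k}^2$ coming from the substitution gives the coefficient $4(M_k^2+L^2)$ in front of $\norm{\xx_{k+1}-\xx_k}^2$.

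Finally I would take expectations, identify $\Eb{\norm{\xx_{k+1}-\xx_k}^2} = R_k$ and $\Eb{\norm{\mm_k - \nabla f(\xx_k)}^2} = \Delta_k$, and divide through by $4$ to isolate $(M_k^2+L^2)R_k$ on the left; rearranging then yields the claimed bound. There is essentially no analytical obstacle: the only thing to be careful about is the bookkeeping of the inexactness residual---verifying that the rest of the argument (the decomposition and the smoothness step) never invoked \Cref{assumption:exact-proximal-step}, so that dropping it costs exactly one extra term $\Eb{\norm{\nabla\Omega_k(\xx_{k+1})}^2}$ together with the slightly worse numerical constant $\frac14$.
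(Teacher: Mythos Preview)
Your proposal is correct and matches the paper's proof essentially line for line: the same decomposition of $\nabla F(\xx_{k+1})$, the same substitution $\mm_k+\nabla\psi(\xx_{k+1})=\nabla\Omega_k(\xx_{k+1})-M_k(\xx_{k+1}-\xx_k)$, the four-term inequality $\norm{\sum_{i=1}^4 \vv_i}^2\le 4\sum_i\norm{\vv_i}^2$, the smoothness bound, and then taking expectations and rearranging.
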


\begin{proof}
    Indeed,
    \begin{align*}
        \norm{\nabla F(\xx_{k+1})}^2 &= \norm{\nabla  f(\xx_{k+1})+\nabla\psi(\xx_{k+1})}^2\\
            &= \norm{\mm_{k}+\nabla\psi(\xx_{k+1})+(\nabla  f(\xx_k)-\mm_{k})+(\nabla  f(\xx_{k+1})-\nabla f(\xx_k))}^2\\
            &= \norm{ M_k (\xx_k-\xx_{k+1}) + \nabla \Omega(\xx_{k+1})+(\nabla  f(\xx_k)-\mm_{k})+(\nabla  f(\xx_{k+1})-\nabla f(\xx_k))}^2\\
            &\leq 4 M_k ^2\norm{\xx_{k+1}-\xx_k}^2 + 4\norm{\nabla \Omega(\xx_{k+1})}^2 + 4\norm{\mm_{k}-\nabla  f(\xx_k)}^2+4\norm{\nabla  f(\xx_{k+1})-\nabla f(\xx_k)}^2\\
            &\leq 4( M_k ^2+L^2)\norm{\xx_{k+1}-\xx_k}^2 + 4\norm{\mm_{k}-\nabla  f(\xx_k)}^2 + 4\norm{\nabla \Omega(\xx_{k+1})}^2.
    \end{align*}
    Taking expectations and rearranging, we obtain the claim.
\end{proof}

Notice that \Cref{lem:descent-Delta} still holds. We continue to use the same Lyapunov function $\Phi_k \eqdef F_k+a\Delta_k $.

\begin{lemma}
    \label{lem:lyapunov-approx}
    Under \cref{assumption:psi,assumption:smoothness,assumption:noise}, for $\{\xx_k\}_{k>0}$ generated by \Cref{alg:composite-momentum}, if $\gamma_k \eqdef \sqrt{\frac{456}{17}}\frac{L}{ M_k -L}$, $a\eqdef \sqrt{\frac{19}{408}}\frac{1}{L}$, that in each iteration condition~\eqref{eq:inexactness-criteria} holds.
    Then, for $ M_k >7L$, we have:
    \begin{equation}
        \label{eq:lyapunov-approx}
        \Phi_{k+1} \leq \Phi_k  - \frac{1}{68 M_k }\Eb{\norm{\nabla F(\xx_{k+1})}^2} + \frac{12L \sigma^2}{ M_k ^2} + \frac{2 S_k}{17 M_k}.
    \end{equation}
\end{lemma}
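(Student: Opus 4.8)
The plan is to mirror the proof of \Cref{lem:lyapunov}, substituting each exact ingredient by its inexact counterpart. First I would assemble the raw one-step bound on the Lyapunov function $\Phi_k = F_k + a\Delta_k$ by adding \Cref{lem:descent-F} (which, as noted, stays valid in the inexact case) to $a$ times \Cref{lem:descent-Delta}. This produces
\[
\Phi_{k+1}\leq F_k - H_k R_k + \Bigl(a(1-\gamma_k)+\tfrac{1}{M_k-L}\Bigr)\Delta_k + a\gamma_k^2\sigma^2,\qquad H_k\eqdef\tfrac{M_k-L}{4}-\tfrac{aL^2}{\gamma_k},
\]
exactly as in the exact case, with the inexactness playing no role yet.

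The new ingredient enters in the second step, where I convert the descent term $-H_kR_k$ into a gradient-norm term. In place of \Cref{lem:gradf-bound} I would invoke its inexact version \Cref{lem:gradf-bound-approx} and control the extra quantity $\Eb{\norm{\nabla \Omega_k(\xx_{k+1})}^2}$ through the inexactness criterion \eqref{eq:inexactness-criteria}. The crucial effect is that this criterion contributes a term $\frac{M_k^2}{16}R_k$, which I move to the left-hand side so that the coefficient $M_k^2+L^2$ of \Cref{lem:gradf-bound} gets enlarged to $C_k\eqdef\frac{17}{16}M_k^2+L^2$. Assuming $H_k>0$, this yields $R_k\geq \frac{1}{C_k}\bigl(\tfrac14\Eb{\norm{\nabla F(\xx_{k+1})}^2}-\Delta_k-S_k\bigr)$, and hence
\[
\Phi_{k+1}\leq F_k-\frac{H_k}{4C_k}\Eb{\norm{\nabla F(\xx_{k+1})}^2}+\Bigl(a(1-\gamma_k)+\tfrac{1}{M_k-L}+\tfrac{H_k}{C_k}\Bigr)\Delta_k+\frac{H_k}{C_k}S_k+a\gamma_k^2\sigma^2.
\]

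The heart of the argument is to choose $a$ and $\gamma_k$ so that the bracketed coefficient of $\Delta_k$ does not exceed $a$, which is what lets the $\Delta$-part fold back into $\Phi_k$; this reduces to the single scalar constraint $\frac{1}{M_k-L}+\frac{H_k}{C_k}\leq a\gamma_k$. I would select the stated values (with $\gamma_k\sim L/(M_k-L)$ and $a\sim 1/L$, keeping $H_k>0$) precisely so that this holds with a little slack. Once the $\Delta_k$ term is eliminated, it remains to simplify the three surviving coefficients for $M_k>7L$: writing $M_k=\tau L$ with $\tau>7$ and using elementary estimates in the spirit of the proof of \Cref{lem:lyapunov-refined}, I would lower-bound $\frac{H_k}{4C_k}\geq\frac{1}{68M_k}$, upper-bound $\frac{H_k}{C_k}\leq\frac{2}{17M_k}$, and upper-bound $a\gamma_k^2\leq\frac{12L}{M_k^2}$, which gives exactly \eqref{eq:lyapunov-approx}.

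The main obstacle I anticipate is the simultaneous calibration of $a$ and $\gamma_k$. The inexactness criterion both inflates the denominator from $M_k^2+L^2$ to $\frac{17}{16}M_k^2+L^2$ and injects the $S_k$ term, which tightens the telescoping constraint relative to the exact setting. Meeting it forces a larger momentum coefficient and a more restrictive stepsize regime ($M_k>7L$ rather than $M_k>4L$), and one must check that, after this enlargement, the gradient-norm coefficient $H_k/(4C_k)$ is still of the optimal order $1/M_k$ with an explicit constant. Balancing these competing demands—keeping $H_k$ bounded away from $0$, satisfying the $\Delta_k$-telescoping inequality, and retaining a $\Theta(1/M_k)$ gradient coefficient—is the delicate bookkeeping; everything else is a routine substitution of the earlier lemmas.
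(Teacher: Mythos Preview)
Your proposal is correct and follows essentially the same route as the paper's proof: combine \Cref{lem:descent-F} and \Cref{lem:descent-Delta}, replace the exact gradient bound by \Cref{lem:gradf-bound-approx} together with the inexactness criterion~\eqref{eq:inexactness-criteria}, choose $a=\frac{\gamma_k(M_k-L)}{8L^2}$ so that $H_k=\frac{M_k-L}{8}$, and then pick $\gamma_k$ to make the $\Delta_k$-coefficient nonpositive before simplifying for $M_k>7L$. The only cosmetic difference is that the paper relaxes the enlarged coefficient to $\frac{17}{16}(M_k^2+L^2)$ rather than your tighter $\frac{17}{16}M_k^2+L^2$, which slightly streamlines the algebra but changes nothing substantive.
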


\begin{proof}
    Denote $G_{k + 1}^2 \eqdef \Eb{\norm{\nabla F(\xx_{k+1})}^2}$.
    Plugging \Cref{eq:inexactness-criteria} into \Cref{lem:gradf-bound-approx}, we obtain
    \[
        (M_k^2 + L^2) R_k
        \geq
        \frac{1}{4} G_{k + 1}^2 - \Delta_k - \Eb{\norm{\nabla \Omega(\xx_{k+1})}^2}
        \geq
        \frac{1}{4} G_{k + 1}^2 - \Delta_k - \frac{M_k^2}{16} R_k - S_k.
    \]
    Rearranging, we get:
    \begin{equation}
        \label{eq:gradf-bound-approx}
        \frac{17}{16} (M_k ^2+L^2) R_k
        \geq
        \frac{1}{4} G_{k + 1}^2 - \Delta_k - S_k.
    \end{equation}

    Recall in the proof of \Cref{lem:lyapunov}, we have: 
    \[
        \Phi_{k+1} 
        \leq
        F_k-H_k R_k+ \left(1-\gamma_k+\frac{1}{a( M_k -L)} \right)a\Delta_k + a\gamma_k^2\sigma^2,
    \]
    where $H_k \eqdef \frac{ M_k -L}{4}-\frac{aL^2}{\gamma_k}$.
    If $H_k > 0$, then we can plug \Cref{eq:gradf-bound-approx} in and get:
    \begin{align*}
        \Phi_{k+1}
        &\leq
        F_k
        -
        \frac{16 H_k}{17( M_k ^2+L^2)} \left( \frac{1}{4} G_{k + 1}^2 - \Delta_k - S_k \right)
        +
        \left(1-\gamma_k+\frac{1}{a( M_k -L)} \right)a\Delta_k  +a\gamma_k^2\sigma^2 \\
        &= F_k - \frac{4 H_k}{17 (M_k^2 + L^2)} G_{k + 1}^2
        + \left(1-\gamma_k+\frac{1}{a( M_k -L)}+\frac{16 H_k}{17 a( M_k ^2+L^2)} \right)a\Delta_k
        + a\gamma_k^2\sigma^2
        + \frac{16 H_k S_k}{17( M_k ^2+L^2)}.
    \end{align*}
    Now let $a=\frac{\gamma_k( M_k -L)}{8L^2}$, so that $H_k = \frac{M_k - L}{8}$.
    Then,
    \[
        \frac{1}{a( M_k -L)}+\frac{16 H_k}{17 a( M_k ^2+L^2)}
        =\frac{8L^2}{\gamma_k( M_k -L)^2}+\frac{16L^2}{17\gamma_k( M_k ^2+L^2)}
        = \frac{8L^2}{\gamma_k}\left(\frac{1}{( M_k -L)^2}+\frac{2}{17( M_k ^2+L^2)}\right).
    \]
    Therefore we need $\frac{8L^2}{\gamma_k}\bigl(\frac{1}{( M_k -L)^2}+\frac{2}{17( M_k ^2+L^2)}\bigr)\leq \gamma_k$
    or, equivalently,
    $\gamma_k^2\geq 8L^2\bigl(\frac{1}{( M_k -L)^2}+\frac{2}{17( M _k^2+L^2)}\bigr)$.
    Since $( M_k -L)^2\leq  M_k ^2+L^2$, it suffices to set $\gamma_k^2=\frac{152}{17}\frac{L^2}{( M_k -L)^2}$, i.e.,
    $\gamma_k = \sqrt{\frac{152}{17}}\frac{L}{ M_k -L}$.
    Note that this requires $ M_k >\frac{(\sqrt{152}+\sqrt{17})L}{\sqrt{17}}$, which is ensured whenever $ M_k> 4L$.
    For our choice of $\gamma_k$, we get
    $a = \frac{1}{8L} \sqrt{\frac{152}{17}}= \sqrt{\frac{19}{156}}\frac{1}{L}$.
    Putting everything together, we get
    \[
        \Phi_{k+1} \leq \Phi_k -\frac{ M_k -L}{34( M_k ^2+L^2)} G_{k + 1}^2 + \frac{4L}{( M_k -L)^2}\sigma^2  + \frac{2( M_k -L)S_k}{17( M_k ^2+L^2)}.
    \]
    Let $ M_k =\tau_k L$ for some $\tau_k>7$.
    Note that $\frac{\tau_k - 1}{\tau_k^2+1}\geq\frac{1}{2\tau_k}$, $\frac{1}{(\tau_k-1)^2}\leq \frac{2}{\tau_k^2}$ and $\frac{\tau_k-1}{\tau_k^2+1}\leq \frac{1}{\tau_k}$.
    Therefore, for $M_k >7L$, we get
    \[
        \Phi_{k+1} \leq \Phi_k  - \frac{1}{68 M_k } G_{k + 1}^2 + \frac{8L}{ M_k ^2}\sigma^2 + \frac{2 S_k}{17 M_k }.
        \qedhere
    \]
\end{proof}

Setting $M_k=M$ some constant, and rearranging and summing \Cref{eq:lyapunov-approx} over $k=0,\ldots,K-1$, we get:
\[
    \frac{1}{68 M } \sum_{k=0}^{K-1}\Eb{\norm{\nabla F(\xx_{k+1})}^2} \leq \Phi_0  + \frac{8KL}{ M ^2}\sigma^2 + \frac{2}{17 M }\sum_{k=0}^{K-1}S_k.
\]
Therefore,
\[
    \frac{1}{K}\sum_{k=0}^{K-1}\Eb{\norm{\nabla F(\xx_{k+1})}^2} \leq \frac{68 M \Phi_0}{K} + \frac{544L\sigma^2}{ M }+ \frac{8}{K}\sum_{k=0}^{K-1}S_k.
\]
Then for $ M =7L + \sqrt{\frac{8KL\sigma^2}{\Phi_0}}$, we have:
\[
    \frac{1}{K}\sum_{k=0}^{K-1}\Eb{\norm{\nabla F(\xx_{k+1})}^2} \leq \cO\biggl(\sqrt{\frac{L\Phi_0\sigma^2}{K}} + \frac{L\sigma^2}{K} \biggr) + \frac{8}{K}\sum_{k=0}^{K-1}S_k.
\]

\section{Sampling from a Stream of Data}
\label{sec:sampling}

Following \Cref{thm:convergence-exact}, we briefly mentioned that one can efficiently sample the desired output point. In this section, we explain how to perform such sampling at no extra computation and memory cost. This might have been discussed in the literature, but we still provide a detailed explanation for completeness and the reader's convenience.

\begin{proposition}
    \label{prop:sampling}
    Given a stream of points $\{\xx_k\}_{k=1}^{\infty}$ in $\R^d$ and positive scalars $\{h_k\}_{k=1}^{\infty}$, we can maintain, at each step~$k \geq 1$, the random variable $\xx_{t(k)}$, where $t(k)$ is a random index from $\{1, \ldots, k\}$ chosen with probabilities $\Pr(t(k) = i) = \frac{h_i}{H_k}$, $i = 1, \ldots, k$, where $H_k\eqdef \sum_{i=1}^k h_i$. This requires only $\cO(d)$ memory and computation.
\end{proposition}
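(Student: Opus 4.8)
The plan is to use a streaming (reservoir-style) scheme that stores only a single ``candidate'' point together with the running weight sum, and to verify by induction that this candidate is distributed exactly as $\xx_{t(k)}$. Concretely, I would maintain two quantities across the stream: a point $\yy_k \in \R^d$ and the scalar partial sum $H_k = \sum_{i=1}^k h_i$. Initialize $\yy_1 \eqdef \xx_1$ and $H_1 \eqdef h_1$. When the new point $\xx_k$ arrives at step $k \geq 2$, first update $H_k \eqdef H_{k-1} + h_k$, then draw an independent Bernoulli variable and set
\[
    \yy_k \eqdef \begin{cases} \xx_k & \text{with probability } \tfrac{h_k}{H_k},\\[2pt] \yy_{k-1} & \text{with probability } 1 - \tfrac{h_k}{H_k}. \end{cases}
\]
I would then claim that $\yy_k$ has the same law as $\xx_{t(k)}$, i.e.\ $\Pr(\yy_k = \xx_i) = h_i / H_k$ for every $1 \leq i \leq k$.

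The correctness is proved by induction on $k$. The base case $k = 1$ is immediate since $\yy_1 = \xx_1$ and $H_1 = h_1$. For the inductive step, assume $\Pr(\yy_{k-1} = \xx_i) = h_i / H_{k-1}$ for all $i \leq k-1$. The freshly arrived index is selected with the correct probability by construction: $\Pr(\yy_k = \xx_k) = h_k / H_k$. For an older index $i < k$, the candidate survives only if it was equal to $\xx_i$ at the previous step and the Bernoulli draw decides to keep it, so, using independence of the draw from the past,
\[
    \Pr(\yy_k = \xx_i) = \Bigl(1 - \frac{h_k}{H_k}\Bigr)\frac{h_i}{H_{k-1}} = \frac{H_{k-1}}{H_k}\cdot\frac{h_i}{H_{k-1}} = \frac{h_i}{H_k},
\]
which closes the induction. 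The crucial identity is $1 - h_k/H_k = H_{k-1}/H_k$: it makes the old normalization constant $H_{k-1}$ cancel and be replaced by the new one $H_k$, and this telescoping is exactly what lets a single accept/reject decision per step reproduce the full categorical distribution over $\{1,\dots,k\}$.

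Finally, for the resource accounting, the scheme stores only the point $\yy_k$ and the scalar $H_{k-1}$, hence $\cO(d)$ memory; each step performs one addition, one division, one comparison against a uniform random number, and at most one copy of a $d$-dimensional vector, hence $\cO(d)$ computation per step, and crucially it never needs the total number of iterations in advance. I do not expect any heavy computation to be the obstacle; the only point requiring care is the bookkeeping in the inductive step, namely ensuring that the Bernoulli draw at step $k$ is independent of all previous draws and of the stream, so that the probability of retaining the stored candidate genuinely factorizes as above. Granting that independence, the telescoping identity does all the work.
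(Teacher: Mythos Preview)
Your proposal is correct and essentially identical to the paper's proof: both implement weighted reservoir sampling by storing a single candidate point and the running sum $H_k$, replacing the candidate with $\xx_k$ at step $k$ with probability $h_k/H_k$. The only cosmetic difference is that the paper verifies the probabilities via the telescoping product $\frac{h_i}{H_i}\prod_{j=i+1}^{k}\frac{H_{j-1}}{H_j}=\frac{h_i}{H_k}$ directly, whereas you phrase the same computation as an induction on $k$.
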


\begin{proof}
    We maintain the variables $\hat \xx_k \in \R^d$ and $H_k \in \R$ which are both initialized to $0$ at step $k = 0$. Then, at each step~$k \geq 1$, we update $H_k \leftarrow H_{k - 1} + h_k$ and also, with probability $\frac{h_k}{H_k}$, we update $\hat \xx_k \leftarrow \xx_k$ (or, with probability $1 - \frac{h_k}{H_k}$, keep the old $\hat{\xx}_k = \hat{\xx}_{k - 1}$). The memory and computation costs are $\cO(d)$.
    Note that, for any $1 \leq i \leq k$, the event $\hat{\xx}_k = \xx_i$ happens iff $\hat{\xx}$ was updated at step $i$ and then not updated at each step $j = i + 1, \ldots, k$.
    Hence, for any $1 \leq i \leq k$, we have
    \[
        \Pr(\hat{\xx}_k = \xx_i)
        =
        \frac{h_i}{H_i} \cdot \prod_{j=i+1}^{k}\left(1-\frac{h_j}{H_j}\right) = \frac{h_i}{H_i}\cdot \prod_{j=i+1}^{k}\frac{H_{j-1}}{H_j} =\frac{h_i}{H_k}.
        \qedhere
    \]
\end{proof}

\section{Convergence in the Non-Differentiable Case}
\label{sec:non-differentiable}

In this section we briefly discuss the convergence of \Cref{alg:composite-momentum} in the non-differentiable case. In this case we assume that $\psi$ is convex. We assume that each subproblem is solved exactly, i.e. $\xx_{k+1}=\argmin_{\xx}\Omega_k(\xx)$. In particular, this implies that there exists subgradient $\gg_{k+1}^{\psi}\in \partial \psi(\xx_{k+1})$ such that:
\[
    \mm_k + \gg_{k+1}^{\psi} +M(\xx_{k+1}-\xx_k)=\0
\]
Therefore, for convenience, we define $\nabla \psi(\xx_{k+1})\eqdef \gg_{k+1}^{\psi}$ as the specific subgradient of $\psi$ at $\xx_{k+1}$ that we use. Similarly, we define $\nabla F(\xx_{k+1})\eqdef \nabla f(\xx_{k+1})+ \gg_{k+1}^{\psi}=\nabla f(\xx_{k+1}) + \nabla \psi(\xx_{k+1})$ as the specific subgradient of $F$ at $\xx_{k+1}$ that we use. This way, all our previous proofs still hold, and we have the following:

\begin{theorem}
    \label{thm:non-differentiable}
    Under \cref{assumption:psi,assumption:smoothness,assumption:noise}, for $\{\xx_k\}_{k>0}$ generated by \Cref{alg:composite-momentum}, if $M_k=M$ some constant, $\gamma \eqdef \frac{3L}{ M -L}$, there exists $M=4L+\frac{3^{\nicefrac{3}{2}}}{2}\sqrt{\frac{KL\sigma^2}{\Phi_0}}$ such that
    $K = \cO\bigl( \frac{L\Phi_0\sigma^2}{\epsilon^2}+\frac{L\Phi_0}{\epsilon} \bigr)$
    iterations of \Cref{alg:composite-momentum} is sufficient to achieve $\Eb{\norm{\nabla F(\xx_{t})}^2} \leq \epsilon$, where $t$ is chosen from $\{1,\ldots ,K\}$ uniformly at random.
\end{theorem}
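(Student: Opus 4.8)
The plan is to reduce this statement entirely to the smooth analysis of \Cref{sec:main-results}: once $\psi$ is convex and the proximal subproblem is solved exactly, I want to show that the whole chain \Cref{lem:descent-Delta}$\to$\Cref{lem:descent-F}$\to$\Cref{lem:gradf-bound}$\to$\Cref{lem:lyapunov}$\to$\Cref{cor:convergence-K} survives verbatim after reinterpreting $\nabla\psi$ and $\nabla F$ as fixed subgradients. First I would observe that since $M = 4L + \frac{3^{\nicefrac{3}{2}}}{2}\sqrt{\frac{KL\sigma^2}{\Phi_0}} > 0$ and $\psi$ is convex, the map $\Omega_k(\xx) = \inp{\mm_k}{\xx} + \psi(\xx) + \frac{M}{2}\norm{\xx-\xx_k}^2$ is $M$-strongly convex and hence has a unique minimizer $\xx_{k+1}$. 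The first-order optimality condition for convex nonsmooth minimization then furnishes a subgradient $\gg_{k+1}^{\psi} \in \partial\psi(\xx_{k+1})$ with $\mm_k + \gg_{k+1}^{\psi} + M(\xx_{k+1}-\xx_k) = \0$; I would fix this $\gg_{k+1}^{\psi}$ for all subsequent use and set $\nabla\psi(\xx_{k+1}) \eqdef \gg_{k+1}^{\psi}$ and $\nabla F(\xx_{k+1}) \eqdef \nabla f(\xx_{k+1}) + \gg_{k+1}^{\psi}$. With this convention the stationarity identity \eqref{eq:proxstep-optcond} holds by construction (replacing the conclusion of \Cref{assumption:exact-proximal-step}), and because $\xx_{k+1}$ globally minimizes $\Omega_k$ we obtain $\Omega_k(\xx_{k+1}) \leq \Omega_k(\xx_k)$ for free, so \Cref{assumption:psi} is automatically satisfied.

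Next I would verify that each ingredient of the smooth analysis touches $\psi$ only through these two facts. \Cref{lem:descent-Delta} uses only \Cref{assumption:noise} and the $L$-smoothness of $f$, so it is unaffected. The proof of \Cref{lem:descent-F} invokes only smoothness of $f$, the algebraic identity $\Omega_k(\xx_k) = \psi(\xx_k) + \inp{\mm_k}{\xx_k}$, and \Cref{assumption:psi}---none of which require $\psi$ to be differentiable---so the descent inequality on $F_k$ is unchanged. Finally, the proof of \Cref{lem:gradf-bound} decomposes $\nabla F(\xx_{k+1})$ as $\mm_k + \nabla\psi(\xx_{k+1})$ plus two correction terms and then substitutes $\mm_k + \nabla\psi(\xx_{k+1}) = -M(\xx_{k+1}-\xx_k)$ into the first term; with our fixed subgradient this substitution is exactly \eqref{eq:proxstep-optcond}, so the bound relating $R_k$ to $\Eb{\norm{\nabla F(\xx_{k+1})}^2}$ carries over.

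Having re-checked the three component lemmas, \Cref{lem:lyapunov} and \Cref{cor:convergence-K} follow with no modification, since their proofs only combine those lemmas algebraically under the stated choices $a = \frac{3}{8L}$, $\gamma = \frac{3L}{M-L}$, and $M > 4L$. Applying \Cref{cor:convergence-K} with the constant $M = 4L + \frac{3^{\nicefrac{3}{2}}}{2}\sqrt{\frac{KL\sigma^2}{\Phi_0}}$ then yields the claimed complexity $K = \cO\bigl(\frac{L\Phi_0\sigma^2}{\epsilon^2} + \frac{L\Phi_0}{\epsilon}\bigr)$ for a uniformly random $\xx_t$. There is no deep technical obstacle here; the part I would state most carefully is the bookkeeping and interpretation. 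Concretely, I must ensure that the \emph{same} subgradient $\gg_{k+1}^{\psi}$ is used wherever $\nabla\psi(\xx_{k+1})$ or $\nabla F(\xx_{k+1})$ appears across the different lemmas, and I would emphasize that the controlled quantity is the norm of a \emph{particular} element $\nabla F(\xx_{k+1}) \in \partial F(\xx_{k+1})$---valid because convexity gives $\partial F = \nabla f + \partial\psi$---so that $\Eb{\norm{\nabla F(\xx_t)}^2} \leq \epsilon$ is a genuine approximate-stationarity guarantee (that $\0$ lies within distance $\sqrt{\epsilon}$ of $\partial F(\xx_t)$ in expectation) rather than a statement about a gradient that need not exist.
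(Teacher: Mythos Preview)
Your proposal is correct and follows exactly the same approach as the paper: fix a subgradient $\gg_{k+1}^{\psi}\in\partial\psi(\xx_{k+1})$ from the optimality condition of the exact proximal step, define $\nabla\psi(\xx_{k+1})$ and $\nabla F(\xx_{k+1})$ accordingly, and observe that all the lemmas from \Cref{sec:main-results} go through verbatim. The paper states this reduction in a single sentence (``This way, all our previous proofs still hold''), whereas you carry out the lemma-by-lemma verification and the interpretation as ${\rm dist}^2(\partial F(\xx_t),\0)\leq\epsilon$ explicitly, but the substance is identical.
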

We remark that this can be reformulated in terms of the distance between $\partial F$ and $\0$: we have ${\rm dist}^2(\partial F(\xx_{t}),0)\leq \epsilon$.

\section{Convergence Criterias}
\label{sec:convergence-criteria}

In this section, we discuss the differences and connections between the convergence criterias used in the literature.

\paragraph*{Proximal Gradient Mapping:} The most popular convergenec criteria used in most of the earlier works on non-convex composite optimization is proximal gradient mapping~\cite{ghadimi2016mini, ghadimi2013accelerated, wang2019spiderboost,hendrikx2020statistically, tran2022hybrid,xu2022momentumbased}. Proximal gradient mapping is defined in a very general context, where we consider the constrained composite optimization problem:
\[
    \min_{\xx\in X} \bigl[ F(\xx) \eqdef f(\xx) + \psi(\xx) \bigr],
\]
with the $1$-strongly convex mirror map $r:X\to \R$. For any vector $\gg$ and $\xx$, and scalar $ M >0$, define:
\[
    \xx^+ \eqdef \argmin_{\xx'\in X}\{\inp{\gg}{\xx'-\xx}+ \psi(\xx)+ \beta_r(\xx',\xx)\}
\]
where $\beta_r(\xx',\xx) = r(\xx')-r(\xx)-\inp{\nabla r(\xx)}{\xx'-\xx}$ is the Bregman divergence of $r$. Then the proximal gradient mapping is defined as:
\[
    P_X(\xx,\gg, M ) \eqdef  M (\xx-\xx^+),
\]
and in the literature, the convergence is studied in terms of $\norm{P_X(\xx,\nabla f(\xx), M )}^2$. We point out that it is very easy to prove analogous versions of \cref{lem:descent-F,lem:gradf-bound} in terms of $\norm{P_X(\xx,\nabla f(\xx), M )}^2$ (which implies that our results extend to the proximal gradient mapping case), but we omit the details here. Instead, we argue that the size of $\nabla F(\xx)$ is a more natural convergence criterion. Consider the simplest situation where $\psi$ is convex and differentiable, $X=\R^d$, and the mirror map $r(\xx)=\frac{1}{2}\norm{\xx}^2$ (i.e. the Euclidean geometry). Then we have the usual stationarity condition for $\xx^+$:
\[
    \nabla f(\xx) + \nabla \psi(\xx^+)+ M (\xx^+-\xx) = 0.
\]
Therefore, we have:
\[
    P_X(\xx,\nabla f(\xx), M ) =  \nabla f(\xx) + \nabla \psi(\xx^+).
\]
In other words, the proximal gradient mapping gives the gradient of $f$ at the current point plus the gradient of $\psi$ at the next point. In contrast, in our analysis, we directly consider the gradient of $F$ at the next point, and we believe that this is more natural and intuitive.

\paragraph*{Moreau Envelope:} A closely related convergence criterion that was proposed to address the non-convergence problem of proximal gradient mapping of the earlier works is the Moreau envelope~\citep{davis2019stochastic}. In Euclidean geometry, for some parameter $\lambda$, the Moreau envelope is defined as the following:
\[
    F_{\lambda} (\xx) \eqdef \min_{\yy} \Bigl\{ F(\yy) + \frac{1}{2\lambda}\norm{\yy-\xx}^2 \Bigr\},
\]
and write $\hat \xx = \argmin_{\yy} \{ F(\yy) + \frac{1}{2\lambda}\norm{\yy-\xx}^2 \}$. The convergence criteria is then defined as:
\[
    \nabla F_{\lambda}(\xx) \eqdef \lambda^{-1} (\xx-\hat \xx).
\]
Equivalently, $\nabla F_{\lambda}(\xx) =\nabla F(\hat \xx)$ in the differentiable case. In other words, the convergence criteria with Moreau envelope uses a surrogate point $\hat\xx$ instead of the actual iterates of the algorithm. Note that, the convergence criteria using Moreau envelop and proximal gradient mapping is with a constant factor of each other for a $\rho$-weakly convex function~\citep{davis2019stochastic}:

\[
    \frac{1}{4}\norm{\nabla F_{\nicefrac{1}{2\rho}}(\xx)} \leq \norm{P(\xx,\nabla f(\xx), \rho )} \leq \frac{3}{2}\left(1+\frac{1}{\sqrt{2}}\right)\norm{\nabla F_{\nicefrac{1}{2\rho}}(\xx)}.
\]

\end{document}